\documentclass[reqno,12pt]{amsart}

\usepackage{enumerate}
\usepackage[margin=1in]{geometry}
\usepackage{ifpdf}
\usepackage{amsmath}
\usepackage{amsfonts}
\usepackage{amssymb}
\usepackage{amsthm}
\usepackage[hyperfootnotes=false]{hyperref}
\usepackage{setspace}
\usepackage{amsrefs}
\usepackage{nicefrac}
\usepackage{graphicx}


\newcommand{\ignore}[1]{}



\newcommand{\linnprod}[2]{\langle #1 , #2 \rangle}

\newcommand{\abs}[1]{\left\lvert {#1} \right\rvert}
\newcommand{\sabs}[1]{\lvert {#1} \rvert}
\newcommand{\norm}[1]{\left\lVert {#1} \right\rVert}
\newcommand{\snorm}[1]{\lVert {#1} \rVert}

\newcommand{\C}{{\mathbb{C}}}
\newcommand{\R}{{\mathbb{R}}}

\newcommand{\N}{{\mathbb{N}}}

\newcommand{\bB}{{\mathbb{B}}}

\newcommand{\bP}{{\mathbb{P}}}

\newcommand{\sA}{{\mathcal{A}}}

\newcommand{\sF}{{\mathcal{F}}}

\newcommand{\sN}{{\mathcal{N}}}
\newcommand{\sO}{{\mathcal{O}}}

\newcommand{\sI}{{\mathcal{I}}}



\newtheorem{thm}{Theorem}[section]

\newtheorem{prop}[thm]{Proposition}

\newtheorem{cor}[thm]{Corollary}

\newtheorem{lemma}[thm]{Lemma}

\theoremstyle{definition}
\newtheorem{defn}[thm]{Definition}
\newtheorem{example}[thm]{Example}

\theoremstyle{remark}
\newtheorem{remark}[thm]{Remark}

\newcommand{\afspan}[1]{\operatorname{affine-span} #1}
\newcommand{\ballsig}[2]{{\bB}_{#2}^{#1+#2}}
\newcommand{\ballsigfull}[3]{{\bB}_{#2}^{#3}}

\author{Dusty Grundmeier}
\address{Department of Mathematics, Ball State University, Muncie, IN 47306,
USA}
\curraddr{Department of Mathematics, Harvard University, Cambridge MA 02138,
USA}
\email{deg@math.harvard.edu}

\author{Ji\v{r}\'{\i} Lebl}
\thanks{The second author was in part supported by NSF grant DMS-1362337 and
Oklahoma State University's DIG and ASR grants.}
\address{Department of Mathematics, Oklahoma State University,
Stillwater, OK 74078, USA}
\email{lebl@math.okstate.edu}

\date{September 9, 2015}

\ifpdf
\hypersetup{
  pdftitle={Initial monomial invariants of holomorphic maps},
  pdfauthor={Dusty Grundmeier and Jiri Lebl},
  pdfsubject={Several Complex Variables, CR Geometry},
  pdfkeywords={Proper holomorphic maps, initial monomial ideal},
}
\fi

\title%
{Initial monomial invariants of holomorphic maps}

\begin{document}


\begin{abstract}
We study a new biholomorphic invariant of holomorphic maps between domains
in different dimensions based on generic initial ideals.
We start with the standard generic monomial ideals to find invariants
for rational maps of spheres and hyperquadrics, giving a readily computable
invariant in this important case.
For example, the generic initial
monomials distinguish all four inequivalent rational proper
maps from the two to the three dimensional ball.
Next,
we associate
to each subspace $X \subset \sO(U)$
a generic initial monomial subspace, which is invariant under
biholomorphic transformations and multiplication by nonzero functions.
The generic initial monomial subspace is a biholomorphic
invariant for holomorphic maps if the target automorphism
is linear fractional as in the case of automorphisms of
spheres or hyperquadrics.
\end{abstract}

\maketitle



\section{Introduction} \label{section:intro}

Let $U \subset \C^n$ and $V \subset \C^m$ be domains.
Denote by $\sO(U,V)$
the set of holomorphic maps 
\begin{equation}
f \colon U \to V .
\end{equation}
Write $\sO(U) = \sO(U,\C)$ as usual.
A fundamental problem in several complex variables is to understand maps
in $\sO(U,V)$
up to automorphisms; that is, $f \colon U \to V$ and
$g \colon U \to V$ are equivalent if there exist biholomorphisms
$\tau \in \operatorname{Aut}(U)$ and
$\chi \in \operatorname{Aut}(V)$ such that
\begin{equation}
f \circ \tau = \chi \circ g .
\end{equation}
A particularly important case is when the maps are proper. A
map $f \colon U \to V$ is proper if for every compact $K \subset \subset V$, the
set $f^{-1}(K)$ is compact.
If $f$ extends continuously to the boundary, then the proper
map $f$ takes boundary to boundary.  The map $f$ restricted to the boundary gives a CR map, and therefore, we have a problem in CR geometry.
It is important to understand those situations where $V$
has a large automorphism group, and thus we focus most of our attention on the unit ball of a certain signature.
For a pair of
integers $(a,b)$, $a \geq 1$, the unit ball of signature $b$
is given by
\begin{equation}
\ballsig{a}{b} = \Bigl\{ z = (z_1,\ldots,z_{a+b}) \in \C^{a+b} :
- \sum_{j=1}^b \abs{z_j}^2 + \sum_{j=1+b}^{a+b} \abs{z_j}^2 < 1 \Bigr\} .
\end{equation}
The ball $\ballsigfull{n}{0}{n}$ with signature 0 is
the standard unit ball $\bB^n$.
The boundary of $\ballsig{a}{b}$ is the hyperquadric
\begin{equation}
Q(a,b) = \Bigl\{ z \in \C^{a+b} :
- \sum_{j=1}^b \abs{z_j}^2 + \sum_{j=1+b}^{a+b} \abs{z_j}^2 = 1 \Bigr\} .
\end{equation}
Hyperquadrics are the model hypersurfaces for Levi-nondegenerate
surfaces.
The hyperquadrics are also the flat models in CR geometry from the point of view of Riemannian geometry.

The CR geometry problem of maps into hyperquadrics has a long history beginning with
Webster who in 1978~\cite{Webster:1978} proved that every algebraic
hypersurface can be embedded into a hyperquadric for large enough $a$ and
$b$.  On the other hand 
Forstneri\v{c}~\cite{Forstneric86} proved that in general a CR submanifold will not map
to a finite dimensional hyperquadric.
Thus not every CR submanifold can be realized as a submanifold
of the flat model (i.e. the hyperquadric).

The mapping problem has been studied extensively for proper maps between
balls, where the
related CR question is to classify the CR maps between spheres.
That is, suppose $f \colon \bB^n \to \bB^N$ is a proper holomorphic map.
Two such maps $f$ and $g$ are \emph{spherically equivalent} if
there exist automorphisms $\tau \in \operatorname{Aut}(\bB^n)$
and $\chi \in \operatorname{Aut}(\bB^N)$ such that $f \circ \tau = \chi
\circ g$.
There has been considerable progress in the classification of such maps,
especially for small codimensional
cases; see for example
\cites{Alexander77,Faran82,Forstneric89,DAngelo88,DAngelo13,HuangJiYin14}
and the many references within.

In particular, Forstneri\v{c}~\cite{Forstneric89} proved that given
sufficient boundary regularity, the map is rational and the degree
bounded in terms of the dimensions only.  The degree is an invariant
under spherical equivalence and the sharp bounds for the degree were
conjectured by D'Angelo~\cite{DKR}.

In this article we introduce a new biholomorphic invariant that is far more general than 
degree.  When the map is rational and the domain and target are balls
(possibly of nonzero signature), we obtain invariants via generic initial
ideals from commutative algebra.  For arbitrary domains and maps, we
generalize the smallest degree part of the ideal.

The techniques center on the idea of generic initial monomial ideals; see
e.g.\ Green~\cite{Green:gin}.  Given a homogoeneous ideal $\sI$, 
the monomial ideal $\operatorname{in}(\sI)$ is generated by the initial monomials of elements of $\sI$ (see section 3 for precise definitions).  
The \emph{generic initial ideal}, denoted $\operatorname{gin}(\sI)$, is an initial monomial ideal after
precomposing $\sI$ with a generic invertible linear map.
Grauert was the first to introduce generic initial ideals to several complex variables
(see \cite{Grauert}), and they have been used extensively
for understanding singularities of varieties.

For rational maps of balls of the form $\frac{f}{g}$, we homogenize $f$ and $g$, and we then look at
the ideal generated by the components.  We prove that the generic initial
ideal generated by the homogenizations of $f$ and $g$ is invariant under spherical equivalence. More precisely, the first main result is the following.

\begin{thm}
Suppose $f_1 \colon \bB^n \to \bB^N$ and
$f_2 \colon \bB^n \to \bB^N$ are rational proper maps that are spherically
equivalent.  Let $F_1$ and $F_2$ be the respective homogenizations.  Then
\begin{equation}
\operatorname{gin}\bigl(\sI(F_1)\bigr) = 
\operatorname{gin}\bigl(\sI(F_2)\bigr) .
\end{equation}
\end{thm}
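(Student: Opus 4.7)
The plan is to reduce the theorem to the invariance of the generic initial ideal under linear changes of coordinates. Specifically, I would show that the spherical equivalence $f_1 \circ \tau = \chi \circ f_2$ translates, at the level of homogenizations, into the equality $\sI(F_2) = \tilde\tau^* \sI(F_1)$ for some $\tilde\tau \in \operatorname{GL}(n+1, \C)$ acting by linear substitution. The theorem then follows from the standard fact that $\operatorname{gin}(L^* \sI) = \operatorname{gin}(\sI)$ for every $L \in \operatorname{GL}(n+1, \C)$, which holds because right multiplication by $L$ sends any Zariski-dense subset of $\operatorname{GL}(n+1, \C)$ to another Zariski-dense subset.

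First, I would handle precomposition with $\tau \in \operatorname{Aut}(\bB^n)$. Every automorphism of the unit ball is linear fractional, hence extends uniquely to a projective linear automorphism of $\CP^n$, or equivalently to an invertible linear map $\tilde\tau \in \operatorname{GL}(n+1, \C)$. Writing $f_1 = P/Q$ in reduced form with homogeneous components $P = (P_1, \ldots, P_N)$ and common denominator $Q$, all of degree $d$, the homogenization of $f_1 \circ \tau$ is the tuple $(P \circ \tilde\tau, Q \circ \tilde\tau)$, with each component still homogeneous of degree $d$. Hence $\sI(F_{f_1 \circ \tau}) = \tilde\tau^* \sI(F_1)$.

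Next, I would handle postcomposition with $\chi \in \operatorname{Aut}(\bB^N)$. Writing $\chi$ in its linear fractional form $\chi(w) = (Aw + b)/(\linnprod{w}{c} + e)$, the associated $(N+1) \times (N+1)$ block matrix is invertible. Applied to $f_2 = P/Q$, this yields $\chi \circ f_2 = (AP + bQ)/(\linnprod{P}{c} + eQ)$, with numerator and denominator of common degree $d$. These $N+1$ homogeneous polynomials are obtained from the generators $P_1, \ldots, P_N, Q$ of $\sI(F_2)$ by an invertible linear combination, so $\sI(F_{\chi \circ f_2}) = \sI(F_2)$. Combining the two steps via $f_1 \circ \tau = \chi \circ f_2$ gives $\tilde\tau^* \sI(F_1) = \sI(F_2)$, and the invariance of $\operatorname{gin}$ finishes the argument.

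The main obstacle I anticipate is verifying that these operations interact cleanly with the reduction of the homogenization to lowest terms. If $\sI(F)$ is defined from a representation of $f$ by numerators and a common denominator with no nontrivial common polynomial factor, then one must check that both linear substitution in the source variables and invertible linear combinations of the generators preserve reducedness; both reduce to the observation that any common factor $H$ of the transformed tuple yields, via the inverse operation, a common factor of the original tuple. With this bookkeeping in place, tracking the common degree $d$ through both operations is straightforward, since all transformations are linear in both the variables and the components.
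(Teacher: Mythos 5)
Your proposal is correct and follows essentially the same route as the paper: pass to the homogenized projective setting, observe that the ball automorphisms extend to invertible linear maps so that precomposition becomes a linear substitution (under which $\operatorname{gin}$ is invariant by Galligo) and postcomposition becomes an invertible linear recombination of the generators (which leaves the ideal itself unchanged). Your extra care about reducedness of the homogenization is a reasonable bookkeeping point that the paper elides, but it does not change the argument.
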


In section \ref{section:ratmaps}, we prove this result, and compute this new
invariant for a number of well-known examples. In particular we show how this invariant distinguishes many of these maps. In section \ref{section:quotientinvration}, we obtain a further
invariant by considering the holomorphic decomposition of the quotient
\begin{equation} \label{eq:quotient}
\frac{\snorm{f(z)}^2-\sabs{g(z)}^2}{\snorm{z}^2-1} .
\end{equation}

To generalize these results to arbitrary holomorphic maps we improve
a result proved in Grundmeier-Lebl-Vivas~\cite{GLV} that
itself is a version of Galligo's theorem for vector subspaces of $\sO(U)$.
Given a subspace $X \subset \sO(U)$, we precompose with a generic affine
map $\tau$, and
define the generic initial monomial subspace as the
space spanned by the initial monomials of elements of $X \circ \tau$, denoted $\operatorname{gin}(X)$.
Let $X$ be the affine span of the components of a holomorphic
map $F \colon U \to \C^N$, denoted by $\afspan{F}$. See section \ref{section:affinespans} for precise definitions.
We obtain an invariant under biholomorphic transformations of $U$
and invertible linear fractional transformations of $\C^N$.
By using affine rather than just linear maps we obtain invariants of the map
without having a distinguished point. The second main result says this is,
indeed, an invariant.

\begin{thm}
Let $M, M' \subset \C^n$ be connected real-analytic CR submanifolds.
Suppose $F \colon M \to Q(a,b)$ and
$G \colon M' \to Q(a,b)$ are real-analytic CR maps
equivalent in the sense that
there exists a real-analytic CR isomorphism $\tau \colon M' \to M$
and a linear fractional automorphism $\chi$ of $Q(a,b)$ such that
\begin{equation}
F \circ \tau = \chi \circ G .
\end{equation}
Then
\begin{equation}
\operatorname{gin}(\afspan{F}) =
\operatorname{gin}(\afspan{G}) .
\end{equation}
\end{thm}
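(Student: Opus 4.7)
The plan is to reduce the equivalence $F \circ \tau = \chi \circ G$ to two operations under which the generic initial monomial subspace is invariant as announced in the introduction: precomposition by a biholomorphism of the source, and multiplication of every element of a subspace by a nowhere vanishing holomorphic function. Once this reduction is in place, the theorem follows from those two invariances, each of which rests on the affine-span Galligo-type result developed later in the paper.

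First I would handle the target side. Writing the linear fractional automorphism of $Q(a,b)$ in projective form as $\chi(w) = (Aw + b)/(c \cdot w + d)$, where $c \cdot w + d$ is a scalar affine function, each component of $\chi \circ G$ becomes an affine function of $G$ divided by the common denominator $c \cdot G + d$. Affine combinations preserve this shape, and since the $(N+1)\times(N+1)$ matrix implementing $\chi$ in projective coordinates is invertible, every affine function of $G$ appears as such a numerator. This yields
\[
\afspan{\chi \circ G} \;=\; \frac{1}{c \cdot G + d}\,\afspan{G},
\]
with $c \cdot G + d$ nowhere vanishing because $\chi$ is an automorphism of the hyperquadric.

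Next I would handle the source side. The hypothesis that $M$ and $M'$ are connected real-analytic CR submanifolds and that $\tau$ is a real-analytic CR isomorphism allows $\tau$ to be extended to a biholomorphism between open neighborhoods of $M'$ and $M$ in $\C^n$, and likewise identifies $\afspan{F}$ and $\afspan{G}$ with subspaces of holomorphic functions on such neighborhoods. With the extension still denoted $\tau$, one has $\afspan{F \circ \tau} = (\afspan{F}) \circ \tau$, and the equivalence $F \circ \tau = \chi \circ G$ rearranges to
\[
(c \cdot G + d) \cdot \bigl( (\afspan{F}) \circ \tau \bigr) \;=\; \afspan{G}.
\]
Applying $\operatorname{gin}$ to both sides and invoking the biholomorphic-source and nonzero-factor invariances then produces the desired equality $\operatorname{gin}(\afspan{F}) = \operatorname{gin}(\afspan{G})$.

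The main obstacle is supplying the two invariance properties in the affine-span framework: one must first upgrade the linear Galligo theorem of \cite{GLV} to allow generic affine (rather than just linear) precomposition, so that no distinguished point in $U$ is singled out, and then check that, after such a generic affine precomposition, taking initial monomials commutes with multiplication by a nowhere vanishing holomorphic factor. A subsidiary technical point is the extension of the CR isomorphism $\tau$ to a biholomorphism of open neighborhoods, which depends on the real-analytic CR structure and, in full generality, on a genericity or minimality hypothesis on $M$ and $M'$.
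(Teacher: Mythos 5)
Your proposal follows essentially the same route as the paper: the target automorphism is absorbed by showing the affine span only changes by the factor coming from the denominator of $\chi$ (the paper's Lemma~\ref{lemma:targetspan}), the source side is handled by extending $\tau$ holomorphically and invoking invariance of the gin under biholomorphic precomposition and under multiplication by a nonzero function (Lemmas~\ref{lemma:biholgin} and~\ref{lemma:multgin}), all resting on the affine Galligo-type theorem. The only caveat is your claim that the denominator $c\cdot G+d$ is nowhere vanishing --- the paper explicitly notes that linear fractional automorphisms of $Q(a,b)$ generally do have poles meeting $Q(a,b)$ --- but this is harmless since the multiplication invariance only requires the factor to be not identically zero.
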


In sections \ref{section:affinespans} and \ref{section:gins}, we give
precise definitions and lemmas. In particular we prove the extended version of Galligo's theorem for our setting. In section \ref{section:ginsasinvar}, we prove the second major theorem of this paper, and we give examples where we compute this invariant. Finally, in section \ref{section:quotient}
we define the generic initial
subspace for the analogue of the quotient \eqref{eq:quotient} 
for CR maps between spheres and hyperquadrics.

The authors would like to acknowledge John D'Angelo for many conversations
on the subject.


\section{The projective setting}
\label{section:projset}

Before we work with arbitrary maps, we consider the special case
of rational maps between hyperquadrics and spheres.  In this case we 
directly apply the standard theory of generic initial ideals to obtain invariants.

For $z,w \in \C^{a+b}$ define
\begin{equation}
\linnprod{z}{w}_b \overset{\text{def}}{=}
\linnprod{I_b z}{w} =
-
\sum_{j=1}^{b} z_j \bar{w}_j 
+
\sum_{j=b+1}^{a+b} z_j \bar{w}_j 
\qquad \text{and} \qquad
\norm{z}_b^2 \overset{\text{def}}{=} 
\linnprod{z}{z}_b .
\end{equation}
By $I_b$ we mean the $(a+b)\times(a+b)$ diagonal matrix with $b$
$(-1)$'s and $a$ $1$'s on the diagonal.  We use the same definition for
the homogeneous case when the index on the variables starts with a zero.
In this case the subscript still refers to the number of negatives.  That is,
for $Z = (Z_0,\ldots,Z_{a+b}) \in \C^{a+b+1}$ we write
\begin{equation}
\linnprod{Z}{W}_{b+1} \overset{\text{def}}{=}
\linnprod{I_{b+1} Z}{W} =
-
\sum_{j=0}^{b} Z_j \bar{W}_j 
+
\sum_{j=b+1}^{a+b} Z_j \bar{W}_j 
\qquad \text{and} \qquad
\norm{Z}_{b+1}^2 \overset{\text{def}}{=} 
\linnprod{Z}{Z}_{b+1} .
\end{equation}

Let us homogenize $Q(a,b)$.  We add a variable $Z_0$ and work with
the homogeneous coordinates $[Z_0,Z_1,\ldots,Z_{n}]$ in $\bP^n$.
Homogenize the equation above to obtain
\begin{equation} \label{eq:defhq}
HQ(a,b+1) \overset{\text{def}}{=} \left\{ Z \in \bP^{a+b} : \norm{Z}_{b+1}^2 = 0 \right\} .
\end{equation}
If we think of $\C^{a+b} \subset \bP^{a+b}$, then $Q(a,b)$ is a subset of
$\bP^{a+b}$ and
$HQ(a,b+1)$ is the closure of $Q(a,b)$ in $\bP^{a+b}$.

Automorphisms of $\bP^{a+b}$ can be represented as invertible
linear maps on $\C^{a+b+1}$.
The automorphisms of $HQ(a,b+1)$ are those linear maps $T$
that preserve the form in \eqref{eq:defhq} up to a real scalar $\lambda \not= 0$, that is
\begin{equation}
\norm{T Z}_{b+1}^2 = \lambda \norm{Z}_{b+1}^2 .
\end{equation}
If we represent $T$ as a matrix, then the condition above is $T^* I_{b+1}
T = \mu I_b$, where $\mu$ is $\pm\sqrt{\abs{\lambda}}$.
If $a \not= b+1$, then $\lambda > 0$.  If $\lambda < 0$, then the
automorphism swaps the sides of $HQ(a,b+1)$.
Write the set
of the corresponding automorphisms of $\bP^{a+b}$ as $\operatorname{Aut}\bigl(HQ(a,b+1)\bigr)$.
As usual the point $Z \in \bP^{a+b}$ is an equivalence class of points of $\C^{a+b+1}$ up to
complex multiple.  Thus (as long as $a \not= b+1$)
the group $\operatorname{Aut}\bigl(HQ(a,b+1)\bigr)$ is the group $SU(a,b+1)
/ K$ where $K$ is the subgroup of matrices $\zeta I$ where $\zeta^{a+b+1} =
1$.  If $a = b+1$, we need to include the matrix that switches sides.
The important fact for us is that automorphisms are
represented by matrices.

Fix $(a,b)$ and $(A,B)$.
A rational map $F \colon \bP^{a+b} \dashrightarrow \bP^{A+B}$ is
represented by a homogeneous polynomial map of $\C^{a+b+1}$ to $\C^{A+B+1}$.
The equivalence we wish to consider is the following.
Two
rational maps $F \colon \bP^{a+b} \dashrightarrow \bP^{A+B}$ and
$G \colon \bP^{a+b} \dashrightarrow \bP^{A+B}$ are equivalent if there
exists $\tau \in \operatorname{Aut}\bigl(\bP^{a+b+1}\bigr)$ and
$\chi \in \operatorname{Aut}\bigl(\bP^{A+B+1}\bigr)$ such that
\begin{equation}
F \circ \tau = \chi \circ G .
\end{equation}
In the applications we will have
$\tau \in \operatorname{Aut}\bigl(HQ(a,b+1)\bigr)$ and
$\chi \in \operatorname{Aut}\bigl(HQ(A,B+1)\bigr)$.

If $f \colon \bB^n \to \bB^N$ is a rational proper map of balls,
its homogenization $F \colon \bP^{n} \dashrightarrow \bP^{N}$
takes $HQ(n,1)$ to $HQ(N,1)$.
The equivalence on such maps $F$
using the groups
$\operatorname{Aut}\bigl(HQ(n,1)\bigr)$ and
$\operatorname{Aut}\bigl(HQ(N,1)\bigr)$
is precisely the standard spherical equivalence on $f$; that is,
$f \colon \bB^n \to \bB^N$ and
$g \colon \bB^n \to \bB^N$ are spherically equivalent
if there exist automorphisms 
$\tau$ and $\chi$ of $\bB^n$ and $\bB^N$ respectively such that
$f \circ \tau = \chi \circ g$.


\section{Generic initial ideals and rational maps}
\label{section:ratmaps}

In this section we briefly introduce the relevant definitions and results from commutative algebra.
We use the setup from
Green \cite{Green:gin}, and in a later section we use the techniques developed by the authors in
\cite{GLV}.  To be consistent with these two papers, we use the
slightly unusual monomial ordering as used by Green.  

Let $Z_0,Z_1,\ldots,Z_n$ denote our variables.  Given a multi-index 
$\alpha \in \N_0^{n+1}$ we write
$Z^\alpha$ to mean
$Z_0^{\alpha_0}Z_1^{\alpha_1}\cdots
Z_n^{\alpha_n}$ as usual, and let $\abs{\alpha} = \alpha_0 + \cdots + \alpha_n$
denote the total degree.  A
\emph{multiplicative monomial order}
is a total ordering on all monomials $Z^\alpha$, such that
\begin{enumerate}[(i)]
\item
$Z_0 > Z_1 > \cdots > Z_n$,
\item $Z^\alpha > Z^\beta ~ \Rightarrow ~ Z^\gamma Z^\alpha >
Z^\gamma Z^\beta$,
\item
$\abs{\alpha} < \abs{\beta} ~\Rightarrow~ Z^\alpha  > Z^\beta$.
\end{enumerate}
Such orderings are not unique.  Using the so-called graded reverse lexicographic
ordering when $n=2$, we obtain
\begin{equation}
1 > Z_0 > Z_1 > Z_2 > Z_0^2 > Z_0Z_1 > Z_1^2 > Z_0Z_2 > Z_1 Z_2 > Z_2^2 >
\cdots .
\end{equation}

Fix a certain monomial order.
Given a set $S$ of monomials, then the \emph{initial monomial}
is the maximal monomial in $S$ according to the ordering.  
For a homogeneous polynomial $P$, write
\begin{equation}
\operatorname{in}(P) \overset{\text{def}}{=} \text{initial monomial of $P$.}
\end{equation}
Let $\sI$ be a homogeneous ideal.  Define
the \emph{initial monomial ideal} $\operatorname{in}(\sI)$
as the smallest ideal such that $z^\alpha \in \operatorname{in}(\sI)$
whenever $z^\alpha = \operatorname{in}(P)$ for some $P \in \sI$.

Galligo's theorem implies that for an open dense set of linear maps $T$,
$\operatorname{in}(\sI) = \operatorname{in}(\sI \circ T)$.
Let $T$ be such a generic linear map and define
\begin{equation}
\operatorname{gin}(\sI) \overset{\text{def}}{=} \operatorname{in}(\sI \circ
T) .
\end{equation}

Finally, for a rational map $F \colon \bP^{n} \dashrightarrow \bP^{N}$ define
\begin{equation}
\sI(F) = \text{ideal generated by components of $F$ in homogeneous
coordinates.}
\end{equation}

\begin{prop}
Suppose $F \colon \bP^{n} \dashrightarrow \bP^{N}$ and
$G \colon \bP^{n} \dashrightarrow \bP^{N}$ are equivalent in the sense
that there exist
$\tau \in \operatorname{Aut}\bigl(\bP^n\bigr)$ and
$\chi \in \operatorname{Aut}\bigl(\bP^N\bigr)$ such that
\begin{equation}
F \circ \tau = \chi \circ G .
\end{equation}
Then
\begin{equation}
\operatorname{gin}\bigl(\sI(F)\bigr) = 
\operatorname{gin}\bigl(\sI(G)\bigr) .
\end{equation}
\end{prop}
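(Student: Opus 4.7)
The plan is to unwind the equivalence $F \circ \tau = \chi \circ G$ at the level of the ideal of components, reduce it to a precomposition by a fixed linear map, and then invoke the definition of $\operatorname{gin}$ together with the openness/density of the ``generic'' set in $GL_{n+1}(\bC)$.

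First I would handle the target automorphism. Since $\chi \in \operatorname{Aut}(\bP^N)$ is represented by an invertible linear map on $\C^{N+1}$, the homogeneous components of $\chi \circ G$ are just invertible linear combinations of the components of $G$. The ideal generated by a set of homogeneous polynomials depends only on their linear span, so
\begin{equation}
\sI(\chi \circ G) = \sI(G).
\end{equation}
Next I would handle the source automorphism. The element $\tau \in \operatorname{Aut}(\bP^n)$ is represented by an invertible linear map on $\C^{n+1}$, and precomposing each component of $F$ with $\tau$ produces the homogeneous components of $F \circ \tau$. Thus $\sI(F \circ \tau)$ is the ideal generated by $\{P \circ \tau : P \text{ a generator of } \sI(F)\}$, which is the same as $\sI(F) \circ \tau$ (the image of $\sI(F)$ under the induced ring automorphism $P \mapsto P \circ \tau$). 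Combining with the first step,
\begin{equation}
\sI(F) \circ \tau = \sI(G).
\end{equation}

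Now I would conclude via the definition of the generic initial ideal. By Galligo's theorem, there is an open dense set $\mathcal{U}_G \subset GL_{n+1}(\C)$ such that $\operatorname{in}(\sI(G) \circ T') = \operatorname{gin}(\sI(G))$ for all $T' \in \mathcal{U}_G$, and an analogous open dense set $\mathcal{U}_F$ for $F$. The set $\mathcal{U}_F \cap (\tau \cdot \mathcal{U}_G)$ (where $\tau \cdot \mathcal{U}_G = \{\tau \circ T' : T' \in \mathcal{U}_G\}$) is an intersection of two open dense subsets of $GL_{n+1}(\C)$, hence is nonempty. Picking any $T = \tau \circ T'$ in that intersection, I get
\begin{equation}
\operatorname{gin}(\sI(F)) = \operatorname{in}(\sI(F) \circ T) = \operatorname{in}\bigl((\sI(F) \circ \tau) \circ T'\bigr) = \operatorname{in}(\sI(G) \circ T') = \operatorname{gin}(\sI(G)).
\end{equation}

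There is really no substantial obstacle here; the only subtle point is the last step, namely verifying that multiplying the generic set by a fixed invertible $\tau$ preserves openness and density so that a common generic $T$ exists. This is immediate because left translation by $\tau$ is a homeomorphism of $GL_{n+1}(\C)$, so $\tau \cdot \mathcal{U}_G$ is itself open and dense, and two open dense sets in an irreducible variety always meet.
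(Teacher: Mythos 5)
Your proof is correct and follows essentially the same route as the paper's: absorb $\chi$ by noting that invertible linear combinations of the components of $G$ generate the same ideal, and absorb $\tau$ into the genericity in the definition of $\operatorname{gin}$. The paper simply asserts the latter step by "definition of the generic initial ideal," whereas you spell out the translation-of-the-generic-set argument explicitly; both are fine.
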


\begin{proof}
By definition of the generic initial ideal,
$\operatorname{gin}\bigl(\sI(F \circ \tau)\bigr) = 
\operatorname{gin}\bigl(\sI(F)\bigr)$,
as $\tau$ is an invertible linear map.
The ideal generated by
linear combinations of $G$ is the same as the one generated by $G$,
and $\chi$ is also an invertible linear map.  So,
$\operatorname{gin}\bigl(\sI(\chi \circ G)\bigr) = 
\operatorname{gin}\bigl(\sI(G)\bigr)$.
\end{proof}

In the important special case of rational proper maps of balls we get the first main theorem from the introduction.

\begin{cor}
Suppose $f \colon \bB^n \to \bB^N$ and
$g \colon \bB^n \to \bB^N$ are rational proper maps that are spherically
equivalent.  Let $F$ and $G$ be the respective homogenizations.  Then
\begin{equation}
\operatorname{gin}\bigl(\sI(F)\bigr) = 
\operatorname{gin}\bigl(\sI(G)\bigr) .
\end{equation}
\end{cor}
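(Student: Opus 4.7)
The plan is to derive this corollary directly from the preceding proposition by lifting the spherical equivalence on the balls to a projective equivalence of the homogenizations. The content of Section~\ref{section:projset} already does most of the work: every automorphism of $\bB^n$ is (up to scalar) a linear automorphism of $\C^{n+1}$ preserving the form defining $HQ(n,1)$, and hence belongs to $\operatorname{Aut}(\bP^n)$; the analogous statement holds for $\bB^N$. Consequently, the given $\tau \in \operatorname{Aut}(\bB^n)$ and $\chi \in \operatorname{Aut}(\bB^N)$ lift to $\tilde{\tau} \in \operatorname{Aut}(\bP^n)$ and $\tilde{\chi} \in \operatorname{Aut}(\bP^N)$.

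Next, I would homogenize the spherical equivalence relation $f \circ \tau = \chi \circ g$. Since this identity holds on the nonempty open set $\bB^n \subset \bP^n$ and all maps involved are rational in homogeneous coordinates (with $\tilde{\tau}$ and $\tilde{\chi}$ acting linearly), the identity propagates to an equality of rational maps $F \circ \tilde{\tau} = \tilde{\chi} \circ G$ between $\bP^n$ and $\bP^N$, using that two rational maps agreeing on a nonempty open set agree as rational maps. This is precisely the hypothesis of the preceding proposition, which then immediately yields $\operatorname{gin}(\sI(F)) = \operatorname{gin}(\sI(G))$.

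The only obstacle is a bookkeeping matter: an equality of projective rational maps $F \circ \tilde{\tau} = \tilde{\chi} \circ G$ means equality of the component tuples only up to a common nonzero scalar. Working with reduced homogeneous representations of $F$ and $G$ of the same degree, that scalar is genuinely a nonzero constant, and rescaling all generators of a homogeneous ideal by a common nonzero constant leaves the ideal unchanged. Hence the proof of the proposition (which only uses invariance of $\operatorname{gin}$ under a generic invertible linear substitution on one side and invariance of $\sI$ under invertible linear recombination of the components on the other) applies verbatim, and the corollary follows.
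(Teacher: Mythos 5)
Your proposal is correct and follows essentially the same route as the paper: the paper treats this corollary as immediate from the preceding proposition via the identification, made in Section~\ref{section:projset}, of spherical equivalence with projective equivalence of the homogenizations under $\operatorname{Aut}\bigl(HQ(n,1)\bigr) \subset \operatorname{Aut}(\bP^n)$ and $\operatorname{Aut}\bigl(HQ(N,1)\bigr) \subset \operatorname{Aut}(\bP^N)$. Your explicit handling of the scalar ambiguity in the projective equality is a detail the paper leaves implicit, but it does not change the argument.
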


We close this section with several examples.  The following computations are done with 
\texttt{Macaulay2}~\cite{M2} with the \texttt{GenericInitialIdeal}
package using the standard graded reverse lex ordering unless stated otherwise.
One advantage of using gins as invariants
is that they are very simple to compute using
computer algebra systems.

\begin{example}
Generic initial
ideals distinguish all the maps from $\bB^2$ to $\bB^3$.
Faran~\cite{Faran82}
proved that all maps sufficiently smooth up to the boundary are
spherically equivalent to one of the following 4 maps:
\begin{enumerate}[(i)]
\item $(z_1,z_2) \mapsto (z_1,z_2,0)$,
\item $(z_1,z_2) \mapsto (z_1,z_1z_2,z_2^2)$,
\item $(z_1,z_2) \mapsto (z_1^2,\sqrt{2}z_1z_2,z_2^2)$,
\item $(z_1,z_2) \mapsto (z_1^3,\sqrt{3}z_1z_2,z_2^3)$.
\end{enumerate}

Let us list the results in a table.  The first column gives the
map itself.  The second column gives the map homogenized by
adding the $Z_0$ variable.  The last column gives the
generators of the generic initial ideal.
\begin{center}
\begin{tabular}{l|l|l}
Map & Homogenized & gin
\\
\hline
\hline
$(z_1,z_2,0)$ &
$(Z_0,Z_1,Z_2,0)$ &
$(Z_0,Z_1,Z_2)$
\\
\hline
$(z_1,z_1z_2,z_2^2)$ &
$(Z_0^2,Z_1Z_0,Z_1Z_2,Z_2^2)$ &
$(Z_0^2,Z_0Z_1,Z_1^2,Z_0Z_2,Z_1Z_2^2)$
\\
\hline
$(z_1^2,\sqrt{2}z_1z_2,z_2^2)$ &
$(Z_0^2,Z_1^2,\sqrt{2}Z_1Z_2,Z_2^2)$ &
$(Z_0^2,Z_0Z_1,Z_1^2,Z_0Z_2,Z_2^3)$
\\
\hline
$(z_1^3,\sqrt{3}z_1z_2,z_2^3)$ &
$(Z_0^3,Z_1^3,\sqrt{3}Z_1Z_2Z_0,Z_2^3)$ &
$(Z_0^3, Z_0^2Z_1, Z_0Z_1^2, Z_0^2Z_2,$\\ 
& & $\phantom{(}Z_1^4, Z_1^3Z_2, Z_0Z_1Z_2^2, Z_1^2Z_2^2,$\\
& & $\phantom{(}Z_0Z_2^4, Z_1Z_2^4, Z_2^5)$
\end{tabular}
\end{center}
Notice that the generic initial ideals are all different.  In particular,
we distinguish the two very similar second degree maps
using the third degree part of the ideal.
\end{example}

\begin{example}
In \cite{FHJZ} it was proved that the proper map of $\bB_2$ to $\bB_4$ given by
\begin{equation}
(z_1,z_2) \mapsto
\left(
z_1^2,\sqrt{2}z_1z_2,\frac{z_2^2(z_1-a)}{1-\bar{a}z_1},\frac{z_2^3\sqrt{1-\abs{a}^2}}{1-\bar{a}z_1}
\right)
\end{equation}
is spherically equivalent to a polynomial map if and only if $a=0$.  This
map is the homogeneous second degree map from Faran's theorem above with the
last component tensored by an automorphism of the ball $\bB_2$ taking $a$ to
0.  If $a=0$, then the generic
initial ideal is
\begin{equation}
(Z_0^3,Z_0^2Z_1,Z_0Z_1^2,Z_1^3,Z_0^2Z_2,Z_0Z_1Z_2^2,Z_1^2Z_2^2,Z_0Z_2^4) .
\end{equation}
If $a=\frac{1}{2}$, we obtain a different generic initial ideal
\begin{equation}
(Z_0^3,Z_0^2Z_1,Z_0Z_1^2,Z_1^3,Z_0^2Z_2,Z_0Z_1Z_2^2,Z_1^2Z_2^2,Z_0Z_2^3) .
\end{equation}
\end{example}

\begin{example} In \cite{Lebl:hq23}
the second author
classified 
all the maps from the two sphere, $Q(2,0)$, to the hyperquadric $Q(2,1)$
(see also Reiter~\cite{Reiter:class} for a different approach to the classification).  There are
7 equivalence classes of maps:
\begin{enumerate}[(i)]
\item \label{hqclass1}
$(z_1,z_2) \mapsto (0,z_1,z_2)$,
\item \label{hqclass2}
$(z_1,z_2) \mapsto (z_2^2,z_1^2,\sqrt{2}\,z_2)$,
\item \label{hqclass3}
$(z_1,z_2) \mapsto \left( \frac{z_2}{z_1^2}, \frac{1}{z_1}, \frac{z_2^2}{z_1^2}  \right)$,
\item \label{hqclass4}
$(z_1,z_2) \mapsto \left(
\frac{z_1^2-\sqrt{3}\,z_1z_2+z_2^2-z_1}{z_2^2+z_1+\sqrt{3}\,z_2 - 1} ,
\frac{z_1^2+\sqrt{3}\,z_1z_2+z_2^2-z_1}{z_2^2+z_1+\sqrt{3}\,z_2 - 1} ,
\frac{z_2^2+z_1-\sqrt{3}\,z_2 - 1}{z_2^2+z_1+\sqrt{3}\,z_2 - 1}
\right)$,
\item \label{hqclass5}
$(z_1,z_2) \mapsto \left(
\frac{\sqrt[4]{2}(z_1z_2+iz_1)}{z_2^2+\sqrt{2}\,iz_2+1} ,
\frac{\sqrt[4]{2}(z_1z_2-iz_1)}{z_2^2+\sqrt{2}\,iz_2+1} ,
\frac{z_2^2-\sqrt{2}\,iz_2+1}{z_2^2+\sqrt{2}\,iz_2+1} 
\right)$,
\item \label{hqclass6}
$(z_1,z_2) \mapsto
\left( 
\sqrt{3}\frac{z_2z_1^2-z_2}{3z_1^2+1} ,
\frac{2z_2^3}{3z_1^2+1} ,
\frac{z_1^3+3z_1}{3z_1^2+1}
\right)$,
\item \label{hqclass7}
$(z_1,z_2) \mapsto \bigl(g(z_1,z_2),g(z_1,z_2),1\bigr)$ for an arbitrary holomorphic function $g$.
\end{enumerate}
Note that the maps are written slightly differently from \cite{Lebl:hq23} as we are ordering the negative components first.

We list the generic initial ideal for each of the first 6 classes of maps.  The final 7th class is already distinguished as it is not transversal; that is, it maps an open neighborhood into $Q(2,1)$.
\begin{center}
\begin{tabular}{l|l|l}
Map & Homogenized & gin
\\
\hline
\hline
\eqref{hqclass1} &
$(Z_0,0,Z_1,Z_2)$ &
$(Z_0,Z_1,Z_2)$
\\
\hline
\eqref{hqclass2} &
$(Z_0^2,Z_2^2,Z_1^2,\sqrt{2}Z_2Z_0)$ &
$(Z_0^2, Z_0Z_1, Z_1^2, Z_0Z_2, Z_1Z_2^2, Z_2^3)$
\\
\hline
\eqref{hqclass3} &
$(Z_1^2,Z_2Z_0,Z_1Z_0,Z_2^2)$ &
$(Z_0^2, Z_0Z_1, Z_1^2, Z_0Z_2, Z_1Z_2^2)$
\\
\hline
\eqref{hqclass4} &
$(Z_2^2+Z_1Z_0+\sqrt{3}Z_2Z_0-Z_0^2,$ &
$(Z_0^2, Z_0Z_1, Z_1^2, Z_0Z_2, Z_1Z_2^2)$

\\
& $\phantom{(} Z_1^2-\sqrt{3}Z_1Z_2+Z_2^2-Z_1Z_0,$ & \\
& $\phantom{(} Z_1^2+\sqrt{3}Z_1Z_2+Z_2^2-Z_1Z_0,$ & \\
& $\phantom{(} Z_2^2+Z_1Z_0-\sqrt{3}Z_2Z_0-Z_0^2)$ &
\\
\hline
\eqref{hqclass5} &
$(Z_2^2+\sqrt{2}iZ_2Z_0+Z_0^2,$ &
$(Z_0^2, Z_0Z_1, Z_1^2, Z_0Z_2, Z_1Z_2^2)$
\\
& $\phantom{(} \sqrt[4]{2}(Z_1Z_2+iZ_1Z_0),$ & \\
& $\phantom{(} \sqrt[4]{2}(Z_1Z_2-iZ_1Z_0),$ & \\
& $\phantom{(} Z_2^2-\sqrt{2}iZ_2Z_0+Z_0^2)$ &
\\
\hline
\eqref{hqclass6} &
$(3Z_1^2Z_0+Z_0^3, \sqrt{3}(Z_2Z_1^2-Z_2Z_0^2),$ &
$(Z_0^3,Z_0^2Z_1,Z_0Z_1^2,Z_0^2Z_2,$
\\
& $\phantom{(} 2Z_2^3, Z_1^3+3Z_1Z_0^2)$ &
$\phantom{(}Z_1^4,Z_0Z_1Z_2^2,Z_1^2Z_2^2,Z_1^3Z_2,$ \\
& & $\phantom{(}Z_0Z_2^4,Z_1Z_2^4,Z_2^5)$ 
\end{tabular}
\end{center}
We distinguish some maps, but not all.
It is to be expected that not all maps can be distinguished.
After all, computing
a generic initial ideal throws away much information about the map.
It should be noted that we only looked at small degree examples,
where the number of different possible gins is limited.
\end{example}

\begin{example}
The particular monomial ordering may make a difference, even in simple
situations.  It might be possible to tell some ideals (and hence maps) apart
using one ordering, but not using another.  Let us give a very simple
example.  In $\bP^2$, the
ideal generated by $(Z_0^2,Z_1Z_2)$ has the gin
\begin{equation}
\begin{aligned}
& (Z_0^2,Z_0Z_1,Z_1^3) & & \quad \text{in graded reverse lex ordering,}\\
& (Z_0^2,Z_0Z_1,Z_0Z_2^2,Z_1^4) & & \quad \text{in graded lex ordering.}
\end{aligned}
\end{equation}
However, the ideal generated by $(Z_0^2,Z_1^2)$ has the same gin in both
orderings:
\begin{equation}
(Z_0^2,Z_0Z_1,Z_1^3) .
\end{equation}
\end{example}


\section{Invariants of the quotient for rational maps}
\label{section:quotientinvration}

Let
$F \colon \bP^{a+b} \dashrightarrow \bP^{A+B}$ be a rational map
that takes $HQ(a,b+1)$ to
$HQ(A,B+1)$, where defined.  Identify $F$ with the
homogeneous polynomial map taking $\C^{a+b+1}$ to $\C^{A+B+1}$.
Write
\begin{equation}
\norm{F(Z)}_{B+1}^2 = \norm{Z}_{b+1}^2 q(Z,\bar{Z}) ,
\end{equation}
where the quotient $q$ is a bihomogeneous polynomial in $Z$.
Find the holomorphic decomposition (see p. 101 of \cite{DAngelo:book}) of $q$ as
\begin{equation}
q(Z,\bar{Z}) = \norm{h_+(Z)}^2-\norm{h_-(Z)}^2 ,
\end{equation}
where $h_+$ and $h_-$ are homogeneous holomorphic polynomial maps.
Write
\begin{equation}
H(q) = \{ h_+,h_- \}
\end{equation}
for the set of functions in the holomorphic decomposition of $q$.
The holomorphic decomposition is not unique.  However, the linear span
of $H(q)$ is unique, and therefore the ideal generated by $H(q)$ is unique.
We thus study the ideal $\sI\bigl(H(q)\bigr)$ and
furthermore the generic initial ideal
$\operatorname{gin}\bigl(\sI\bigl(H(q)\bigr)\bigr)$.

The reason for looking at $q$ is that it may reveal further information about
the map that is not immediately visible from $F$.  For example the quotient $q$
was critically used in \cite{DKR} for the degree estimates problem.
The number of functions
in the decomposition of $q$ is often larger than the number of
components of $F$ as many of these components may cancel out
once multiplied with $\norm{Z}_{b+1}^2$. 

Suppose $F \colon \bP^{a+b} \dashrightarrow \bP^{A+B}$ and
$G \colon \bP^{a+b} \dashrightarrow \bP^{A+B}$ are equivalent as before; i.e.
$F \circ \tau = \chi \circ G$, where $\tau$ and $\chi$ are automorphisms
preserving $HQ(a,b+1)$ and $HQ(A,B+1)$ respectively.
Let $q_F$ and $q_G$ be the corresponding
quotients of $F$ and $G$ respectively.  We regard $\tau$ and $\chi$ as linear
maps, which we also rescale, such that $\norm{\tau Z}_{b+1}^2 = \pm \norm{Z}_{b+1}^2$
for $Z \in \C^{a+b+1}$ and
$\norm{\chi W}_{B+1}^2 = \pm \norm{W}_{B+1}^2$ for $W \in \C^{A+B+1}$.  The
$\pm$ is there in case $a=b+1$ or $A=B+1$ and the sides are swapped,
otherwise it would be a $+$.

As
$F$ and $G$ are equivalent, $\norm{F(\tau Z)}_{B+1}^2 =
\norm{\chi \circ G(Z)}_{B+1}^2$.
Then
\begin{multline}
\norm{Z}_{b+1}^2 q_F(\tau Z,\overline{\tau Z})
=
\pm \norm{\tau Z}_{b+1}^2 q_F(\tau Z,\overline{\tau Z})
=
\pm \norm{F(\tau Z)}_{B+1}^2
=
\\
=
\pm \norm{\chi \circ G}_{B+1}^2
=
\pm \norm{G(Z)}_{B+1}^2
=
\pm \norm{Z}_{b+1}^2 q_G(Z,\bar{Z}) .
\end{multline}
In other words
$q_F(\tau Z,\overline{\tau Z}) = \pm q_G(Z,\bar{Z})$.

\begin{prop}
Suppose $F \colon \bP^{a+b} \dashrightarrow \bP^{A+B}$ and
$G \colon \bP^{a+b} \dashrightarrow \bP^{A+B}$ are equivalent in the sense
that there exist
$\tau \in \operatorname{Aut}\bigl(HQ(a,b+1)\bigr)$ and
$\chi \in \operatorname{Aut}\bigl(HQ(A,B+1)\bigr)$ such that
\begin{equation}
F \circ \tau = \chi \circ G .
\end{equation}
Let $q_F$ and $q_G$ be the corresponding quotients.
Then
\begin{equation}
\operatorname{gin}\Bigl(\sI\bigl(H(q_F)\bigr)\Bigr) = 
\operatorname{gin}\Bigl(\sI\bigl(H(q_G)\bigr)\Bigr) .
\end{equation}
\end{prop}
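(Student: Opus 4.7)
My plan is to leverage the identity $q_F(\tau Z,\overline{\tau Z}) = \pm q_G(Z,\bar Z)$ already established immediately before the statement, combine it with the uniqueness of the linear span of a holomorphic decomposition, and finally invoke the same genericity argument used in the proof of the earlier Proposition.

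First, I would write the holomorphic decomposition of $q_F$ as $q_F(W,\bar W) = \snorm{h_+(W)}^2 - \snorm{h_-(W)}^2$ with $H(q_F) = \{h_+,h_-\}$. Substituting $W = \tau Z$ yields
\begin{equation}
q_F(\tau Z,\overline{\tau Z}) = \snorm{h_+(\tau Z)}^2 - \snorm{h_-(\tau Z)}^2,
\end{equation}
so that $\{h_+ \circ \tau, h_- \circ \tau\}$ is a valid holomorphic decomposition of $q_F \circ \tau$. The identity $q_F \circ \tau = \pm q_G$ shows that, after possibly interchanging the two components (which happens precisely when the sign is $-$, e.g.\ when $\tau$ or $\chi$ swaps the sides of its hyperquadric), the set $H(q_F) \circ \tau$ is also a holomorphic decomposition of $q_G$.

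Next, I would invoke the uniqueness statement recorded earlier: the holomorphic decomposition itself is not unique, but the linear span of its constituents is. Applied to the two decompositions of $q_G$, namely the canonical one from $H(q_G)$ and the one from $H(q_F) \circ \tau$, this gives $\operatorname{span} H(q_G) = \operatorname{span}(H(q_F) \circ \tau)$. Consequently the two homogeneous ideals coincide:
\begin{equation}
\sI\bigl(H(q_G)\bigr) = \sI\bigl(H(q_F)\bigr) \circ \tau .
\end{equation}

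Finally, since $\tau$ is represented by an invertible linear map on $\C^{a+b+1}$, the generic initial ideal is insensitive to precomposition with $\tau$ (this is precisely how $\operatorname{gin}$ was defined, and it is the same step used in the proof of the earlier Proposition for $\sI(F)$). Thus $\operatorname{gin}\bigl(\sI(H(q_F)) \circ \tau\bigr) = \operatorname{gin}\bigl(\sI(H(q_F))\bigr)$, yielding the claimed equality. The only subtlety, and the one point I would be careful about, is the bookkeeping of the $\pm$ sign: it only permutes the two functions in $H(q_F) \circ \tau$, and since $H(q)$ is an unordered set and only the span matters, this is harmless. There is no deeper obstacle — unlike the second main theorem, the target automorphism $\chi$ disappears here because we have already absorbed it into the norm squared on the target side.
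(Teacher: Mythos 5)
Your proposal is correct and follows the same route as the paper's (much terser) proof: it rests on the identity $q_F(\tau Z,\overline{\tau Z}) = \pm q_G(Z,\bar{Z})$ established just before the statement, the invariance of the gin under the invertible linear map $\tau$, and the observation that the $\pm$ merely swaps $h_+$ and $h_-$ and so does not affect the span or the ideal. The extra bookkeeping you supply (the uniqueness of the span of the holomorphic decomposition, hence of the ideal) is exactly the detail the paper leaves implicit.
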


\begin{proof}
Above we proved
$q_F(\tau Z,\overline{\tau Z}) = \pm q_G(Z,\bar{Z})$.  
As we are talking about gins, the $\tau$ is not relevant.  The $\pm$
does not change the components of the holomorphic decomposition.
\end{proof}

Again, in the important special case of rational proper maps of balls we get:

\begin{cor}
Suppose $f \colon \bB^n \to \bB^N$ and
$g \colon \bB^n \to \bB^N$ are rational proper maps that are spherically
equivalent.  Let $F$ and $G$ be the respective homogenizations and
let $q_F$ and $q_G$ be the corresponding quotients.
Then
\begin{equation}
\operatorname{gin}\Bigl(\sI\bigl(H(q_F)\bigr)\Bigr) = 
\operatorname{gin}\Bigl(\sI\bigl(H(q_G)\bigr)\Bigr) .
\end{equation}
\end{cor}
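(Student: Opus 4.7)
The plan is to obtain this corollary as an immediate consequence of the preceding proposition, once we identify the spherical equivalence setting as the $(a,b)=(n,0)$, $(A,B)=(N,0)$ case of the projective framework of Section~\ref{section:projset}.

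First, I would recall from Section~\ref{section:projset} that a rational proper map $f\colon \bB^n\to\bB^N$ homogenizes to a rational map $F\colon \bP^n\dashrightarrow\bP^N$ sending (the closure of) $Q(n,0)$ to (the closure of) $Q(N,0)$, i.e.\ $F\bigl(HQ(n,1)\bigr)\subset HQ(N,1)$. Moreover, the discussion at the end of Section~\ref{section:projset} shows that the spherical equivalence of $f$ and $g$ is precisely the same relation as the equivalence of their homogenizations $F$ and $G$ via a pair $\tau\in\operatorname{Aut}\bigl(HQ(n,1)\bigr)$, $\chi\in\operatorname{Aut}\bigl(HQ(N,1)\bigr)$. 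Thus from $f\circ\tau_0=\chi_0\circ g$ one obtains $F\circ\tau=\chi\circ G$ for the corresponding projective automorphisms.

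Second, I would verify that the quotients $q_F,q_G$ attached to $F$ and $G$ by
\begin{equation}
\norm{F(Z)}_1^2 = \norm{Z}_1^2\, q_F(Z,\bar Z), \qquad
\norm{G(Z)}_1^2 = \norm{Z}_1^2\, q_G(Z,\bar Z),
\end{equation}
are exactly the $q$'s referenced in the statement of the corollary, so that $H(q_F),H(q_G)$ are the same objects in both places.

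Third, I would invoke the preceding proposition directly with $(a,b)=(n,0)$ and $(A,B)=(N,0)$, which yields
\begin{equation}
\operatorname{gin}\Bigl(\sI\bigl(H(q_F)\bigr)\Bigr)=\operatorname{gin}\Bigl(\sI\bigl(H(q_G)\bigr)\Bigr).
\end{equation}

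There is no real obstacle here, since the corollary is a direct specialization of the proposition; the only substantive point is the bookkeeping identification of spherical equivalence with projective equivalence of the homogenizations on $HQ(n,1)$ and $HQ(N,1)$, which is already done in Section~\ref{section:projset}. One minor thing worth noting is that because $n\neq 0$ and $N\neq 0$, the anomalous $a=b+1$ side-swap case does not occur, so the $\pm$ in $\norm{\tau Z}_1^2=\pm\norm{Z}_1^2$ is in fact always $+$; this does not affect the conclusion, as the proof of the proposition handles the $\pm$ anyway.
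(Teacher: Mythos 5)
Your proof is correct and follows exactly the route the paper intends: the corollary is stated without proof precisely because it is the specialization $(a,b)=(n,0)$, $(A,B)=(N,0)$ of the preceding proposition, using the identification of spherical equivalence with equivalence of homogenizations via $\operatorname{Aut}\bigl(HQ(n,1)\bigr)$ and $\operatorname{Aut}\bigl(HQ(N,1)\bigr)$ made at the end of Section~\ref{section:projset}. One tiny quibble: when $n=1$ we do have $a=b+1$, so your stated reason for excluding the side-swap is off (the correct reason is that ball automorphisms preserve the interior, forcing $\lambda>0$), but as you note the $\pm$ is handled by the proposition anyway, so this is harmless.
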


\begin{example}
Let us consider the Faran map $(z_1,z_2) \mapsto
(z_1^3,\sqrt{3}z_1z_2,z_2^3)$ that takes $\bB^2$ to $\bB^3$.  We compute
the quotient
\begin{equation}
\frac{\abs{z_1^3}^2+\abs{\sqrt{3}z_1z_2}^2+\abs{z_2^3}^2-1}{\abs{z_1}^2+\abs{z_2}^2-1}
=
\abs{z_1^2}^2+\abs{z_2^2}^2-\abs{z_1z_2}^2+\abs{z_1}^2+\abs{z_2}^2+1 .
\end{equation}
Bihomogenizing the quotient, we obtain all the degree-two monomials in
$H(q)$.  Therefore the gin is generated by all the degree-two monomials:
\begin{equation}
\operatorname{gin}\Bigl(\sI\bigl(H(q)\bigr)\Bigr) =
(Z_0^2, Z_0Z_1, Z_1^2, Z_0Z_2, Z_1 Z_2, Z_2^2).
\end{equation}
\end{example}


\section{Affine spans and automorphisms of the target}
\label{section:affinespans}

To deal with nonrational maps we 
switch to the affine setting.  We work in $\C^{N}$
and treat it as a subset of $\bP^{N}$.  From now on
$z = (z_1,\ldots,z_{N})$ will be the inhomogeneous coordinates on $\bP^{N}$,
that is, coordinates on $\C^{N}$.  In other words, we fix a specific
embedding
$\iota \colon \C^N \to \bP^N$.

For equivalence of maps $F \colon U \to \C^N$ we consider the target
automorphisms to be the linear fractional automorphisms of $\operatorname{Aut}(\bP^N)$
and consider $\C^N \subset \bP^N$ as above.  Take
$\chi \in \operatorname{Aut}(\bP^N)$, 
$F \colon U \to \C^N$, and $G \colon U \to \C^N$.  When we write
an equation such as $\chi \circ G = F$, we mean
$\chi \circ \iota \circ G = \iota \circ F$, where $\iota$ is the embedding above.
That is, we consider $F$ to be valued in $\bP^N$
using the embedding $\iota \colon \C^N \to \bP^N$.
Note that after composing with automorphisms of $\bP^N$ given as linear
fractional maps,
it may happen that the new map has poles in $U$.
In fact, when looking at linear fractional automorphisms of
$Q(a,b)$, there will in general be poles that intersect $Q(a,b)$.

The reason for not simply working in projective space is that the components
in nonrational holomorphic maps $F \colon U \to \C^N$
cannot be homogenized; the degree is unbounded.
For this same reason, we have to work with
affine span instead of just linear span.

\begin{defn}
Given a collection of holomorphic functions $\sF$, the \emph{affine span}
of $\sF$ is
\begin{equation}
\afspan{\sF} = \operatorname{span} \bigl( \sF \cup \{ 1 \} \bigr) .
\end{equation}
\end{defn}
If $F$ is a map, then
$\afspan{F}$ is the affine span of the components of $F$.
Let $X \subset \sO(U)$ be a vector subspace and $\varphi \in \sO(U)$.
Denote by $\varphi X$ the vector subspace obtained by multiplying every element in $X$
by $\varphi$.

\begin{lemma} \label{lemma:targetspan}
Suppose $F \colon U \subset \C^n \to \C^{N}$ and
$G \colon U \subset \C^n \to \C^{N}$ are such that
there exists 
$\chi \in
\operatorname{Aut}(\bP^N)$
such that $F = \chi \circ G$.
Then there exists $\varphi \in \afspan{G}$ such that
\begin{equation}
\afspan{G} = \varphi \afspan{F} .
\end{equation}
\end{lemma}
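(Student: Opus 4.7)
The plan is to exploit the standard matrix representation of linear fractional automorphisms of $\bP^N$ and just unwind the definitions. Let $A = (a_{ij})_{i,j=0}^{N}$ be an invertible $(N+1)\times(N+1)$ matrix representing $\chi$. With the embedding $\iota(z) = [1:z_1:\cdots:z_N]$, the composition $\chi \circ \iota \circ G$ is given in homogeneous coordinates by $A \cdot (1, g_1, \ldots, g_N)^T = (h_0, h_1, \ldots, h_N)^T$, where
\begin{equation}
h_i = a_{i0} + \sum_{j=1}^{N} a_{ij}\, g_j,
\qquad i = 0, 1, \ldots, N .
\end{equation}
By construction each $h_i$ is an affine combination of the components of $G$, so $h_0, h_1, \ldots, h_N \in \afspan{G}$. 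Since $A$ is invertible as a linear map on $\C^{N+1}$, the tuple $(1, g_1, \ldots, g_N)$ is recovered from $(h_0, h_1, \ldots, h_N)$ by applying $A^{-1}$, and hence
\begin{equation}
\afspan{G} \;=\; \operatorname{span}(1, g_1, \ldots, g_N) \;=\; \operatorname{span}(h_0, h_1, \ldots, h_N) .
\end{equation}

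Next I would translate the equation $F = \chi \circ G$ back into affine coordinates. By the definition of $\iota$ the components of $F$ are obtained from $(h_0, \ldots, h_N)$ by dividing by $h_0$ (which must be a nonzero meromorphic function, otherwise $F$ would not land in the affine chart). Thus $F_i = h_i/h_0$, and
\begin{equation}
\afspan{F} \;=\; \operatorname{span}\bigl(1,\; h_1/h_0,\; \ldots,\; h_N/h_0 \bigr) \;=\; \frac{1}{h_0}\operatorname{span}(h_0, h_1, \ldots, h_N) .
\end{equation}

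Setting $\varphi := h_0$, which lies in $\afspan{G}$ by the first paragraph, gives
\begin{equation}
\varphi\, \afspan{F} \;=\; \operatorname{span}(h_0, h_1, \ldots, h_N) \;=\; \afspan{G},
\end{equation}
as required. There is no real obstacle; the only point that needs care is keeping track of the chosen embedding $\iota$ and the role of $h_0$, together with the remark that because $A$ is invertible the linear span of the rows of $A$ applied to $(1, g_1, \ldots, g_N)$ recovers the original affine span of $G$. The argument also makes clear why the statement needs $\varphi \in \afspan{G}$ rather than merely $\varphi$ an arbitrary meromorphic function: $\varphi$ is literally the first coordinate of the image of $\iota \circ G$ under the matrix $A$.
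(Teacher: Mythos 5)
Your proof is correct and follows essentially the same route as the paper: write $\chi$ in homogeneous (numerator/denominator) form, observe that the resulting components $h_0,\ldots,h_N$ all lie in $\afspan{G}$, and take $\varphi = h_0$ (the paper's $Q\circ G$). The only difference is the last step: you obtain $\operatorname{span}(h_0,\ldots,h_N)=\afspan{G}$ directly from the invertibility of $A$, whereas the paper first proves the inclusion $\varphi\,\afspan{F}\subset\afspan{G}$ and then upgrades it to equality by a dimension count using $\chi^{-1}$; your version is marginally more direct.
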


\begin{proof}
Let $\chi(w) = \frac{P(w)}{Q(w)}$ be the linear fractional automorphism
such that $F(z) = \frac{P \circ G(z)}{Q \circ G(z)}$ for $z \in U$.  Here $P
\colon \C^{N} \to
\C^{N}$ and $Q \colon \C^{N} \to \C$ are affine maps.
Therefore
$( Q \circ G ) F = P \circ G$.  Therefore the components of $( Q \circ G ) F$ are in the affine
span of $G$.  Also $Q \circ G$ is in the affine span of $G$.
The span of the components of $( Q \circ G ) F$ and the function $(Q \circ G)$
is exactly $(Q \circ G) \afspan{F}$, and therefore
$(Q \circ G) \afspan{F} \subset \afspan{G}$.

Clearly $\dim \afspan{F} \leq \dim \afspan{G}$.  By inverting $\chi$ and
applying the argument in reverse we get $\dim \afspan{G} \leq \dim
\afspan{F}$.  Therefore,
$(Q \circ G) \afspan{F} = \afspan{G}$.
\end{proof}

The point of the lemma is to eliminate the target
automorphism by looking at the
vector space generated by $F_1,\ldots,F_N$ and 1, up to multiplication by a
scalar-valued function.  To find invariants of the map $F$ we need to find
invariants of this new object under biholomorphisms of the source.


\section{Generic initial vector space} \label{section:gins}

In this section we define the \emph{gin} of
a vector subspace of holomorphic maps.
The idea is to generalize the generic initial ideals 
to the setting of vector spaces of holomorphic functions using affine maps
instead of linear maps.
We generalize the setup from 
Green \cite{Green:gin}, using the techniques developed by the authors in
\cite{GLV}.

Let monomial order be defined on $z_1,\ldots,z_n$ in the same way as above.
That is, given a multi-index 
$\alpha \in \N_0^n$ we write
$z^\alpha$ to mean
$z_1^{\alpha_1}z_2^{\alpha_2}\cdots
z_n^{\alpha_n}$, and $\abs{\alpha} = \alpha_1 + \cdots + \alpha_n$ is the
total degree.  We require the order to be multiplicative:
\begin{enumerate}[(i)]
\item
$z_1 > z_2 > \cdots > z_n$,
\item $z^\alpha > z^\beta ~ \Rightarrow ~ z^\gamma z^\alpha >
z^\gamma z^\beta$,
\item
$\abs{\alpha} < \abs{\beta} ~\Rightarrow~ z^\alpha  > z^\beta$.
\end{enumerate}
In the following definitions we fix a point.  We assume 
this point is $0 \in \C^n$, although any point can be used after translation.

\begin{defn}
Fix a monomial order.  By an initial monomial from a collection we mean
the
largest monomial in the order.  Given a Taylor series $T$, the initial
monomial of $T$ is the largest monomial in $T$ with a nonzero coefficient.
Suppose $U \subset \C^n$ and $0 \in U$.
For $f \in \sO(U)$, let $T_0f$ be the Taylor series for $f$
at $0$ and
\begin{equation}
\operatorname{in}(f) \overset{\text{def}}{=} \text{initial monomial of $T_0f$.}
\end{equation}
Given $X \subset \sO(U)$ a vector subspace, define
the \emph{initial monomial subspace}
\begin{equation}
\operatorname{in}(X) \overset{\text{def}}{=}
\operatorname{span} \{ z^\alpha : z^\alpha = \operatorname{in}(f) \text{ for some } f \in
X \} .
\end{equation}
A subspace $X \subset \sO(U)$ is a \emph{monomial subspace} if
$X$ admits an algebraic basis consisting of monomials.
\end{defn}

If $X$ is a monomial subspace then the basis of monomials
defining it is unique.  Therefore, there is a one-to-one equivalence
between monomial subspaces and subsets of the set of all monomials.

A set $S$ of monomials is \emph{affine-Borel-fixed} if
whenever $z^\alpha \in S$
and $z_j | z^\alpha$, then for all $\ell < j$,
the monomials $z^\alpha\frac{1}{z_j}$ and
$z^\alpha\frac{z_\ell}{z_j}$ are in $S$.
If $X \subset \sO(U)$ is a monomial subspace, then we say that $X$ is
affine-Borel-fixed if the basis of monomials that generates $X$ is
affine-Borel-fixed.

Take an invertible affine self map $\tau$ of $\C^n$ such that $\tau(0) \in U$.
For a subspace $X \subset \sO(U)$, define a subspace $X \circ \tau \subset \sO(U')$
where $U' = \tau^{-1}(U)$.  Notice that $0 \in U'$ and we may again
talk about initial monomials as above.

The initial monomial space $\operatorname{in}(X)$ is not always preserved
under such precomposition
with affine maps.
For example, let
$X$ be the span of
$\{ 1, z_2 \}$, so $X = \operatorname{in}(X)$.
For a generic choice of an affine map $\tau$ we have
$\operatorname{in}(X \circ \tau) = \operatorname{span} \{ 1, z_1 \}$.
Notice that $\{ 1, z_2 \}$ is not affine-Borel-fixed,
but $\operatorname{in}(X \circ \tau)$ is.

We need to prove an analogue to Galligo's Theorem (see Theorem 1.27 in
\cite{Green:gin}).  The finite dimensional version of the result is proved in
\cite{GLV}.

The set $\operatorname{Aff}(n)$ of affine self maps of $\C^n$ can be
parametrized as $M_n(\C) \times \C^n$, or $\C^{n^2} \times \C^n$.  We use
the standard topology on this set.

\begin{thm} \label{thm:ourgalligo}
Suppose $U \subset \C^n$ and $0 \in U$, and
let $X \subset \sO(U)$ be a vector subspace.
There is some neighbourhood $\sN$ of the identity in $\operatorname{Aff}(n)$
and $\sA \subset \sN$ of second category
(countable intersection of open dense subsets of $\sN$), such that
for $\tau \in \sA$,
the space
$\operatorname{in}(X \circ \tau)$ is affine-Borel-fixed.
Furthermore, $\operatorname{in}(X \circ \tau) = \operatorname{in}(X \circ
\tau')$ for any two affine $\tau$ and $\tau'$ in $\sA$.

Finally, if $X$ is finite dimensional, $\sA$ is open and
dense in $\sN$.
\end{thm}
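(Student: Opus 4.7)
The plan is to reduce the statement to the finite-dimensional Galligo-type theorem proved by the authors in \cite{GLV} by truncating at each Taylor order and then taking a countable intersection via the Baire category theorem. Because the monomial order is graded, the initial monomial of any $f \in \sO(U')$ is extracted from the lowest nonvanishing homogeneous part of the Taylor series $T_0 f$. Hence whether a monomial of degree at most $d$ lies in $\operatorname{in}(X \circ \tau)$ is determined only by the $d$-th Taylor polynomials at $0$ of elements of $X \circ \tau$. Writing $J^d Y$ for the space of $d$-th Taylor polynomials of a subspace $Y$, we get
\begin{equation}
\operatorname{in}(X \circ \tau) \cap \operatorname{span}\{ z^\alpha : \abs{\alpha} \leq d \} = \operatorname{in}\bigl(J^d(X \circ \tau)\bigr),
\end{equation}
and the right-hand side depends only on the finite-dimensional subspace $J^d(X \circ \tau)$ inside the polynomials of degree at most $d$.

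Next, an invertible affine map $\tau$ close to the identity induces a linear automorphism on the polynomials of degree at most $d$, continuously in $\tau \in \operatorname{Aff}(n)$. For each $d$ we apply the finite-dimensional Galligo theorem from \cite{GLV} to $J^d X$, yielding a neighborhood $\sN_d$ of the identity in $\operatorname{Aff}(n)$ and an open dense subset $\sU_d \subset \sN_d$ such that for $\tau \in \sU_d$, the space $\operatorname{in}(J^d(X \circ \tau))$ is affine-Borel-fixed through degree $d$ and independent of $\tau$. Fix a small neighborhood $\sN \subset \bigcap_d \sN_d$ and put
\begin{equation}
\sA = \sN \cap \bigcap_{d \geq 0} \sU_d ,
\end{equation}
a countable intersection of open dense subsets of $\sN$, hence of second category by Baire's theorem.

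Affine-Borel-fixedness is a degree-local condition comparing monomials of degrees $\abs{\alpha}$ and $\abs{\alpha}-1$, so affine-Borel-fixedness of each truncation implies it for the full space; similarly, constancy across $\sA$ at each level implies global constancy. When $\dim X < \infty$, row reduction at the level of Taylor series shows that all initial monomials have degree bounded by some $d_0$ uniformly on a neighborhood of the identity, and then $\sA = \sU_{d_0}$ is open and dense.

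The hardest part is the underlying finite-dimensional theorem from \cite{GLV}, together with the bookkeeping required to verify that the open dense sets produced there, governed by linear conditions on the action on polynomials of degree at most $d$, pull back to open dense subsets of the affine group $\operatorname{Aff}(n)$; once that is in place, the Baire argument and the jet-by-jet gluing are routine.
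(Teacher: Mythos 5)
Your opening reduction is fine as far as it goes: because the order is graded, the initial monomial of $f$ sits in the lowest nonvanishing homogeneous part of $T_0f$, so indeed $\operatorname{in}(X\circ\tau)\cap\operatorname{span}\{z^\alpha:\abs{\alpha}\le d\}=\operatorname{in}\bigl(J^d(X\circ\tau)\bigr)$, and affine-Borel-fixedness only compares degrees $\abs{\alpha}$ and $\abs{\alpha}-1$, so it glues over $d$. The gap is in the next step, where you ``apply the finite-dimensional Galligo theorem to $J^dX$.'' That theorem, applied to the finite-dimensional space $Y=J^dX$, controls $\operatorname{in}(Y\circ\tau)$ for generic affine $\tau$; but $(J^dX)\circ\tau\ne J^d(X\circ\tau)$ once $\tau(0)\ne 0$, and the translation part of $\tau$ is essential to the whole statement (the affine-Borel-fixed condition demands the lower-degree monomial $z^\alpha\frac{1}{z_j}$, which composition with a purely linear map can never produce). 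The $d$-jet at $0$ of $f\circ\tau$ is determined by the full Taylor expansion of $f$ at the point $\tau(0)$, not by $J^d_0f$. Concretely, for $X=\operatorname{span}\{z_1^{d+1}\}$ one has $J^dX=\{0\}$, so your set $\sU_d$ carries no information, while $\operatorname{in}(X\circ\tau)\cap\operatorname{span}\{z^\alpha:\abs{\alpha}\le d\}=\operatorname{span}\{1\}$ for every $\tau$ with $\tau(0)$ off the hyperplane $z_1=0$. So the open dense sets you intersect do not govern the truncations you need, and no larger jet of $X$ at the origin fixes this, since no finite jet at $0$ determines $J^d(X\circ\tau)$.

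The paper truncates in the other direction: it picks countably many $f_k\in X$ with $X\subset\overline{\bigcup_m X_m}$ for $X_m=\operatorname{span}\{f_1,\dots,f_m\}$, applies the finite-dimensional theorem of \cite{GLV} to each $X_m$ (legitimate, since each $X_m$ is a finite-dimensional space of holomorphic functions, not of jets), and intersects the countably many open dense sets. It then needs one further step for which your outline has no counterpart: showing that passing from $\bigcup_m X_m$ to its closure, which contains $X$, creates no new initial monomials. This is done with the projections $\pi_k$ onto the span of the first $k$ monomials, continuous by Cauchy's formula, together with a dimension-stabilization argument. If you wanted to salvage your degree-by-degree scheme you would have to control the jet spaces $J^d_pX$ at the moving base point $p=\tau(0)$, and for infinite-dimensional $X$ that again forces a countable exhaustion of $X$; at that point you have essentially reconstructed the paper's argument.
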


So after a generic affine transformation the initial
monomial subspace is always
the same space.
It will also not be necessary to assume $0 \in U$. 
We pick a generic affine $\tau$ such that $\tau(0) \in
U$.  Then $0 \in \tau^{-1}(U)$, and so it makes sense to take
$\operatorname{in}(X \circ \tau)$ for $X \subset \sO(U)$.
Before we prove the theorem, let us make the following definition.

\begin{defn}
Let $U \subset \C^n$ and let $X \subset \sO(U)$ be a subspace.
Take a generic affine $\tau$ such that $\tau(0)
\in U$.  Define the \emph{generic initial monomial subspace}
\begin{equation}
\operatorname{gin}(X) = \operatorname{in}(X \circ \tau) .
\end{equation}
\end{defn}

\begin{remark}
While the proof of the theorem may seem formal, we are
using affine transformations and precomposing formal power series with
affine transformations may not make sense.
\end{remark}

\begin{proof}[Proof of Theorem \ref{thm:ourgalligo}]
The finite dimensional version of the theorem is proved in
\cite{GLV} as Proposition 1 (or Proposition 5.5 on arXiv).

Suppose $X$ is not finite dimensional.
The space $\sO(U)$ is a separable Frech\'et space.  For
the subspace $X \subset \sO(U)$, pick a countable set
$\{ f_k \}_{k=1}^\infty$ in $X$ such that if
\begin{equation}
X_m = \operatorname{span} \{ f_1, f_2, \ldots, f_m \} ,
\end{equation}
then 
\begin{equation}
\bigcup_m X_m \subset X \subset
\overline{X} = \overline{\bigcup_m X_m} .
\end{equation}

Apply the finite dimensional result to $X_m$ for each $m$.  The $\tau$
varies over countably many open dense sets and the
intersection of those sets is a second category set as claimed.

Given any $k$, the initial $k$ monomials
in $\{ f_1, f_2, \ldots, f_m \}$ must stabilize as $m$ grows.  That is,
the first $k$ monomials are the same for all large enough $m$.
We therefore obtain a sequence of initial monomials.  To obtain the first
$k$ monomials, simply go far enough
until the initial $k$ monomials stabilize.
Then $z^\alpha$ is in this sequence if and only if
$z^\alpha \in \operatorname{in}\bigl(\bigcup_m X_m\bigr)$.

It is left to show that 
\begin{equation}
\operatorname{in}\bigl(\overline{X}\bigr)
=
\operatorname{in}\biggl(\overline{\bigcup_m X_m}\biggr)
=
\operatorname{in}\biggl(\bigcup_m X_m\biggr) .
\end{equation}
Suppose this is not true.
Find the first (maximal) monomial $z^\alpha$ such that
$z^\alpha \in \operatorname{in}\bigl(\overline{X}\bigr)$,
but
$z^\alpha \not\in \operatorname{in}\bigl(\bigcup_m X_m\bigr)$.
Suppose $z^\alpha$ is the $k$th monomial in the ordering given.
Let $\pi_k$ be the projection of $\sO(U)$ onto the space spanned by
the first $k$ monomials.  By Cauchy's formula, $\pi_k$ is continous.
The dimension of $\pi_k \bigl( \bigcup_{m=1}^M X_m  \bigr)$ stabilizes
as $M$ grows, and hence for large enough $M$
\begin{equation}
\pi_k \bigl( X_M  \bigr)
=
\pi_k \biggl( \bigcup_{m=1}^\infty X_m  \biggr)
=
\pi_k \bigl( \overline{X}  \bigr) .
\end{equation}
There exists an $f \in \overline{X}$ with $z^\alpha = \operatorname{in}(f)$.
By the above equality there must exist
a $g \in X_M$ such that $z^\alpha = \operatorname{in}(g)$.  That is a
contradiction.
\end{proof}

We now generalize the gins to biholomorphic maps.
Let $X \subset \sO(U)$ be a subspace and
let $f \colon U' \to U$ be a holomorphic map.  Define
$X \circ f$ to be the vector subspace of $\sO(U')$ consisting of
all $F \circ f$ for $F \in X$.

\begin{lemma} \label{lemma:biholgin}
Let $X$ be a vector subspace of $\sO(U)$ and let $f \colon U' \to U$ be a
biholomorphic map.  Then $\operatorname{gin}(X) = \operatorname{gin}(X \circ
f)$.
\end{lemma}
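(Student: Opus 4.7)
The plan is to decompose $f^{-1} \circ \tau$ (for a generic affine $\tau$) into an affine map and a biholomorphism tangent to the identity at $0$, and then show that precomposition by a tangent-to-identity biholomorphism leaves the initial monomial subspace unchanged. The first reduction is formal; the second is the heart of the matter.

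By Theorem~\ref{thm:ourgalligo}, pick a generic affine $\tau$ with $\tau(0) \in U$ so that $\operatorname{in}(X \circ \tau) = \operatorname{gin}(X)$. Set $\psi = f^{-1} \circ \tau$, a biholomorphism from a neighborhood of $0$ into $U'$, and let $\bar\psi(z) = \psi(0) + D\psi(0) z$ be its first-order Taylor approximation at $0$. Then $\mu = \bar\psi^{-1} \circ \psi$ satisfies $\mu(0) = 0$ and $D\mu(0) = I$, and $\psi = \bar\psi \circ \mu$. Because $f$ is biholomorphic, the assignment $\tau \mapsto \bar\psi$ is a local diffeomorphism on $\operatorname{Aff}(n)$: the constant parts correspond through $f^{-1}$, and the linear parts through left multiplication by the invertible matrix $Df^{-1}(\tau(0))$. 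Pulling the second-category ``good'' set for $X \circ f$ back through this local diffeomorphism and intersecting with the good set for $X$ yields a nonempty set of $\tau$'s for which simultaneously $\operatorname{in}(X \circ \tau) = \operatorname{gin}(X)$ and $\operatorname{in}((X \circ f) \circ \bar\psi) = \operatorname{gin}(X \circ f)$.

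The main step is a separate claim: for any subspace $Y \subset \sO$ defined near $0$ and any biholomorphism $\mu$ with $\mu(0) = 0$, $D\mu(0) = I$, one has $\operatorname{in}(Y \circ \mu) = \operatorname{in}(Y)$. The argument uses the filtration $\sO \supset \mathfrak{m} \supset \mathfrak{m}^2 \supset \cdots$ by order of vanishing at $0$. Because property (iii) of the monomial order ranks lower total degree higher, $\operatorname{in}(Y)$ splits as a direct sum indexed by degree, and its degree-$k$ part equals the (finite-dimensional) span of initial monomials of the image of $Y \cap \mathfrak{m}^k$ in the graded piece $\mathfrak{m}^k / \mathfrak{m}^{k+1}$. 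Precomposition with $\mu$ preserves this filtration (since $\mu(0) = 0$) and induces the identity map on the associated graded (since $D\mu(0) = I$), so $(Y \circ \mu) \cap \mathfrak{m}^k$ has the same image in each graded piece as $Y \cap \mathfrak{m}^k$, and the initial monomials match up in every degree.

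Applying this claim with $Y = X \circ f \circ \bar\psi$ yields $\operatorname{in}(Y \circ \mu) = \operatorname{in}(Y)$. Since $Y \circ \mu = X \circ f \circ \bar\psi \circ \mu = X \circ f \circ \psi = X \circ \tau$, we conclude $\operatorname{gin}(X) = \operatorname{in}(X \circ \tau) = \operatorname{in}(X \circ f \circ \bar\psi) = \operatorname{gin}(X \circ f)$. The main obstacle is the invariance claim for tangent-to-identity biholomorphisms; the essential input is property (iii) of the monomial order, which guarantees that higher-order perturbations of the coordinates can only affect higher-degree initial monomials, enabling a clean degree-by-degree comparison on the associated graded.
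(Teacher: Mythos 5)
Your proposal is correct and follows essentially the same route as the paper: reduce to a biholomorphism tangent to the identity at $0$ and use property (iii) of the monomial order to see that the higher-order terms cannot affect the initial monomials. Your associated-graded phrasing of that key step and your explicit bookkeeping of the generic affine $\tau$ via the local diffeomorphism $\tau \mapsto \bar\psi$ are somewhat more careful than the paper's treatment, but they are refinements of the same argument rather than a different one.
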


\begin{proof}
Suppose $0$ is in both $U'$ and $U$, and suppose $f(0) = 0$.
Composing $f$ with an invertible linear map does not change the gin (we must
change $U$ appropriately).  Hence, assume 
$f'(0)$ is the identity.
Let
\begin{equation}
f(z) = z + E(z) ,
\end{equation}
where $E$ is of order 2 and higher.  We only need to show that
$\operatorname{in}(X) = \operatorname{in}(X \circ f)$.

Suppose $z^\alpha \in \operatorname{in}(X)$, that is, there exists an
element of $X$ of the form
\begin{equation}
g(z) = z^\alpha  + \sum_{\beta < \alpha} c_\beta z^\beta .
\end{equation}
In
$(g \circ f)$, the terms from $E$ create
monomials of degree strictly higher than $\abs{\alpha}$.  So
\begin{equation}
(g \circ f)(z) = z^\alpha  + \sum_{\beta < \alpha} d_\beta z^\beta .
\end{equation}
for some $d_\beta$.  Therefore $z^\alpha \in \operatorname{in}(X \circ f)$.

Since $f^{-1}(z) = z + F(z)$ for some $F$ or higher order we obtain by
symmetry that if $z^\alpha \in \operatorname{in}(X \circ f)$ then
if $z^\alpha \in
\operatorname{in}(X \circ f \circ f^{-1}) = 
\operatorname{in}(X)$.
\end{proof}

\begin{lemma} \label{lemma:multgin}
Let $X$ be a vector subspace of $\sO(U)$, $0 \in U$,
and let $\varphi \colon U \to \C$ be a
holomorphic function that is not identically zero.  Then
$\operatorname{gin}(X) = \operatorname{gin}(\varphi  X)$.
\end{lemma}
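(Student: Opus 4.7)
The plan is to select one generic affine map $\tau$ that computes both $\operatorname{gin}(X)$ and $\operatorname{gin}(\varphi X)$ simultaneously and that additionally satisfies $\varphi(\tau(0)) \neq 0$, and then to observe that, in the monomial order used throughout, multiplication by a holomorphic function which does not vanish at $0$ has no effect on the initial monomial subspace.

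For the first step, I would apply Theorem \ref{thm:ourgalligo} to both $X$ and $\varphi X$; this produces second-category subsets $\sA_X, \sA_{\varphi X}$ of some neighbourhood of the identity in $\operatorname{Aff}(n)$ on which $\operatorname{in}(X \circ \tau)$ and $\operatorname{in}((\varphi X) \circ \tau)$ are constant, equal respectively to $\operatorname{gin}(X)$ and $\operatorname{gin}(\varphi X)$. Since $\varphi$ is a nonzero holomorphic function on $U$, the set
\[
\sA_\varphi = \{ \tau \in \operatorname{Aff}(n) : \tau(0) \in U \text{ and } \varphi(\tau(0)) \neq 0 \}
\]
is open and dense in any sufficiently small neighbourhood of the identity. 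I would pick $\tau$ in the nonempty intersection $\sA_X \cap \sA_{\varphi X} \cap \sA_\varphi$ and set $\psi = \varphi \circ \tau$ and $Y = X \circ \tau$, so that $\psi(0) \neq 0$, $(\varphi X) \circ \tau = \psi Y$, $\operatorname{gin}(X) = \operatorname{in}(Y)$, and $\operatorname{gin}(\varphi X) = \operatorname{in}(\psi Y)$.

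The lemma then reduces to showing $\operatorname{in}(\psi Y) = \operatorname{in}(Y)$. For any $g \in Y$ with initial monomial $z^\alpha$ and corresponding nonzero Taylor coefficient $c_\alpha$, axiom (iii) of the monomial order forces every other monomial in the Taylor expansion of $g$ at $0$ to have total degree at least $|\alpha|$. Writing $\psi = \psi(0) + E$ with $E$ vanishing at $0$, the product $\psi g$ therefore has the form $\psi(0) c_\alpha z^\alpha + (\text{terms of total degree strictly greater than } |\alpha|)$, and these additional terms are all strictly smaller than $z^\alpha$ in the order by axiom (iii) again; hence $\operatorname{in}(\psi g) = z^\alpha$. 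This single computation yields both inclusions: if $z^\alpha = \operatorname{in}(g) \in \operatorname{in}(Y)$ then $z^\alpha = \operatorname{in}(\psi g) \in \operatorname{in}(\psi Y)$, and conversely if $z^\alpha = \operatorname{in}(h)$ with $h = \psi g \in \psi Y$, then by the same calculation $\operatorname{in}(g) = z^\alpha \in \operatorname{in}(Y)$.

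I do not expect any substantive obstacle here. The entire argument rests on the observation that, in a strictly degree-respecting monomial order, multiplication by a local unit alters only terms of strictly higher degree, which are automatically smaller in the order. The only minor bookkeeping point is the compatibility of the three genericity conditions on $\tau$, and this is immediate because a second-category set intersected with an open dense set remains of second category.
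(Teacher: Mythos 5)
Your proof is correct and follows essentially the same route as the paper: precompose with a generic affine map so that $\varphi$ does not vanish at the chosen base point, then observe that multiplication by a function with $\psi(0)\neq 0$ leaves every initial monomial unchanged because the order is degree-respecting and multiplicative. One small imprecision: $\psi g$ is not $\psi(0)c_\alpha z^\alpha$ plus terms of total degree \emph{strictly} greater than $\lvert\alpha\rvert$ (the remaining monomials of $\psi(0)g$ may have degree exactly $\lvert\alpha\rvert$), but those terms are still strictly smaller than $z^\alpha$ in the order since they already were so in $g$, and the terms coming from $E$ do have higher degree, so the conclusion $\operatorname{in}(\psi g)=z^\alpha$ stands.
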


\begin{proof}
As we are talking about gins, we precompose with a generic affine map,
which also precomposes $\varphi$, and therefore we assume that
$\varphi(0) \not= 0$.  In fact, without loss of generality we assume
$\varphi(0) = 1$.
Suppose $z^\alpha \in \operatorname{in}(X)$, that is, there is a $g \in
X$ of the form
\begin{equation}
g(z) = z^\alpha  + \sum_{\beta < \alpha} c_\beta z^\beta .
\end{equation}
Then
\begin{equation}
\varphi(z) g(z) = z^\alpha  + \sum_{\beta < \alpha} d_\beta z^\beta ,
\end{equation}
for some $d_\beta$ by multiplicativity of the monomial ordering.  Therefore
$z^\alpha \in \operatorname{in}(\varphi X)$.  By symmetry, 
$\operatorname{in}(X) = \operatorname{in}(\varphi X)$.
\end{proof}


\section{Gins as invariants of maps} \label{section:ginsasinvar}

If $X \subset \sO(U)$ is a subspace and $U' \subset U$ is an open
set then the restriction $X|_{U'}$ (the space of restrictions
of maps from $X$ to $U'$) clearly has the same gin as $X$.
Let us extend gins to complex manifolds.  Let
$X \subset \sO(U)$ be a subspace where
$U$ is a connected complex manifold of dimension $n$.
As gin is invariant under biholomorphic 
transformations, it is well-defined on every chart for $U$.
If two (connected) charts overlap, then the gin
must be the same (compute the gin on the
intersection).  Therefore, $\operatorname{gin}(X)$ is well-defined.

\begin{thm} \label{thm:invmap}
Let $U, W$ be connected complex manifolds of dimension $n$.
Suppose $F \colon U \to \C^{N}$ and
$G \colon W \to \C^{N}$ are equivalent
in the sense that
there exists a biholomorphic map $\tau \colon W \to U$
and a linear fractional automorphism $\chi \in \operatorname{Aut}(\bP^N)$ such that
\begin{equation}
F \circ \tau = \chi \circ G .
\end{equation}
Then
\begin{equation}
\operatorname{gin}(\afspan{F}) =
\operatorname{gin}(\afspan{G}) .
\end{equation}
\end{thm}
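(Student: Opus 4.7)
The plan is to assemble the three lemmas from the preceding section into a short chain of equalities. First I would rewrite the hypothesis as $F \circ \tau = \chi \circ G$ and set $\tilde F = F \circ \tau \colon W \to \C^N$, which is a genuine holomorphic map because $F$ is holomorphic on $U$ and $\tau$ maps $W$ biholomorphically to $U$. Since $\tilde F = \chi \circ G$ is $\C^N$-valued, the denominator in the linear-fractional representation $\chi(w) = P(w)/Q(w)$ satisfies $Q \circ G \not\equiv 0$ on $W$, so Lemma \ref{lemma:targetspan} applies (after extending it from domains in $\C^n$ to complex manifolds via a chart, which is valid because the proof is purely algebraic in the components and the constant $1$): there exists $\varphi \in \afspan{G}$, namely $\varphi = Q \circ G$, with
\begin{equation}
\afspan{G} \;=\; \varphi \,\afspan{\tilde F}.
\end{equation}

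Next I would invoke Lemma \ref{lemma:multgin} to strip off the scalar factor $\varphi$, giving
\begin{equation}
\operatorname{gin}\bigl(\afspan{G}\bigr) \;=\; \operatorname{gin}\bigl(\varphi \,\afspan{\tilde F}\bigr) \;=\; \operatorname{gin}\bigl(\afspan{\tilde F}\bigr).
\end{equation}
The remaining step is to remove the source biholomorphism $\tau$. The key trivial observation is that $\afspan{\tilde F} = \afspan{F \circ \tau} = \afspan{F} \circ \tau$, since composing a linear combination of the $F_j$ and $1$ with $\tau$ is the same as composing each generator with $\tau$ (because $1 \circ \tau = 1$). Then Lemma \ref{lemma:biholgin}, applied to $\tau \colon W \to U$, yields
\begin{equation}
\operatorname{gin}\bigl(\afspan{F} \circ \tau\bigr) \;=\; \operatorname{gin}\bigl(\afspan{F}\bigr),
\end{equation}
and concatenating these equalities proves the theorem.

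The only mild subtlety, and the step I would be most careful about, is ensuring that Lemmas \ref{lemma:targetspan}, \ref{lemma:biholgin}, and \ref{lemma:multgin} — all stated for subspaces of $\sO(U)$ with $U$ a domain in $\C^n$ — apply when $U$ and $W$ are complex manifolds. This is handled by the paragraph preceding the theorem: gin is a biholomorphic invariant, hence chart-independent, so one may restrict to a coordinate neighborhood containing a preimage of a generic point, carry out the argument there, and transport the result back. No new ideas beyond the bookkeeping already set up in the previous section are needed, so the proof is essentially a three-line composition of the lemmas.
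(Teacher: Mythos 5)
Your proposal is correct and follows essentially the same route as the paper: reduce to domains via charts, apply Lemma~\ref{lemma:targetspan} to absorb $\chi$ into a scalar factor $\varphi$, strip $\varphi$ with Lemma~\ref{lemma:multgin}, and remove $\tau$ with Lemma~\ref{lemma:biholgin} using $\afspan{(F\circ\tau)} = (\afspan{F})\circ\tau$. The extra care you take in checking $Q\circ G\not\equiv 0$ and in justifying the chart reduction is sound but not a departure from the paper's argument.
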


\begin{proof}
As noted above, we take two charts of $U$ and $W$, 
and therefore, without loss of generality
we assume that $U$ and $W$ are domains in $\C^n$.

As $F \circ \tau$ and $G$ are equivalent via a target automorphism,
Lemma~\ref{lemma:targetspan} says
\begin{equation}
\afspan{(F \circ \tau)}
=
\varphi \afspan{G}.
\end{equation}
As 
$\afspan{(F \circ \tau)} = 
( \afspan{F} ) \circ \tau$, 
Lemma~\ref{lemma:biholgin} says
\begin{equation}
\operatorname{gin}\bigl(\afspan{(F \circ \tau)} \bigr)
=
\operatorname{gin}(\afspan{F} ) .
\end{equation}
Via Lemma~\ref{lemma:multgin}, we get 
\begin{equation}
\operatorname{gin}(\varphi \afspan{G} )
=
\operatorname{gin}(\afspan{G} ) .
\end{equation}
The result follows.
\end{proof}

The same proof is used for the following CR version.  In the CR version
we start with real-analytic CR maps. Since real-analytic CR maps extend to holomorphic
maps, we therefore work with the
extended holomorphic maps when taking gins and affine spans.

\begin{cor}
Let $M, M' \subset \C^n$ be connected real-analytic CR submanifolds.
Suppose $F \colon M \to Q(a,b)$ and
$G \colon M' \to Q(a,b)$ are real-analytic CR maps
equivalent in the sense that
there exists a real-analytic CR isomorphism $\tau \colon M' \to M$
and a linear fractional automorphism $\chi$ of $Q(a,b)$ such that
\begin{equation}
F \circ \tau = \chi \circ G .
\end{equation}
Then
\begin{equation}
\operatorname{gin}(\afspan{F}) =
\operatorname{gin}(\afspan{G}) .
\end{equation}
\end{cor}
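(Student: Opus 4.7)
The plan is to reduce the corollary to Theorem \ref{thm:invmap} by extending all the CR data to genuine holomorphic data on open sets in $\C^n$, and then invoking the theorem directly.

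First, I would use the fact that a real-analytic CR map defined on a real-analytic CR submanifold of $\C^n$ extends uniquely to a holomorphic map on some open neighborhood (into a wedge when $M$ is not generic; what matters for gins is that we get a genuinely holomorphic object on an open set of $\C^n$, which is guaranteed after passing to the intrinsic complexification). Let $\widetilde F$, $\widetilde G$, $\widetilde\tau$ denote such extensions on connected open neighborhoods $U \supset M$, $U' \supset M'$, and after shrinking $U'$ if necessary we may arrange $\widetilde\tau(U') \subset U$. Since $\tau$ is a real-analytic CR isomorphism, its inverse also extends holomorphically, and uniqueness of extension together with the identity principle forces $\widetilde\tau$ to be biholomorphic on $U'$ (possibly after a further shrinking).

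Second, I would apply the identity principle to the equality $F \circ \tau = \chi \circ G$, which already holds on $M'$. The extended identity $\widetilde F \circ \widetilde\tau = \chi \circ \widetilde G$ is an equality of holomorphic maps on the connected open set $U'$ (note that $\chi$ is linear fractional, and after possibly shrinking $U'$ we stay away from the pole locus of $\chi$ on a nonempty open subset; the identity principle then propagates the equality to all of $U'$ where both sides are defined). By Lemma~\ref{lemma:targetspan} this gives $\afspan{(\widetilde F \circ \widetilde\tau)} = \varphi\,\afspan{\widetilde G}$ for some $\varphi \in \afspan{\widetilde G}$, exactly as in the proof of Theorem~\ref{thm:invmap}.

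Third, I would invoke Theorem~\ref{thm:invmap} for $\widetilde F$ and $\widetilde G$: combining Lemmas \ref{lemma:biholgin} and \ref{lemma:multgin} exactly as in that proof, we obtain
\begin{equation}
\operatorname{gin}(\afspan{\widetilde F}) = \operatorname{gin}(\afspan{\widetilde G}).
\end{equation}
By the convention stated just before the corollary, $\operatorname{gin}(\afspan{F})$ and $\operatorname{gin}(\afspan{G})$ are by \emph{definition} the gins of the holomorphic extensions, so this is the desired equality.

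The main obstacle is the technical bookkeeping around the holomorphic extensions when $M$ or $M'$ fails to be generic: one has to pin down precisely what domain the extensions live on and that the biholomorphic, linear-fractional, and multiplication-by-$\varphi$ manipulations all take place on a common connected open set where every step is valid. Once this setup is in place, no new idea beyond Theorem~\ref{thm:invmap} is required, which is exactly the remark the authors make.
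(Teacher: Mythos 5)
Your proposal is correct and follows essentially the same route as the paper, which simply remarks that real-analytic CR maps extend holomorphically and that the proof of Theorem~\ref{thm:invmap} then applies verbatim to the extensions; you have filled in the extension and identity-principle details that the authors leave implicit. (One minor quibble: real-analytic CR functions extend holomorphically to a neighborhood in the intrinsic complexification, not merely to a wedge, so the parenthetical about wedges is unnecessary.)
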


Again we may take $M$ and $M'$ to be submanifolds of
a complex manifold of dimension $n$ instead of $\C^n$.

For examples we need only look at rational maps.  If the
map is rational we homogenize with $Z_0$ as before.
The generic initial monomial vector space is
simply the lowest degree part of the generic initial ideal
generated by the components of the homogenized map.  That is because the
lowest degree part of the ideal is the linear span of the components of the
map.  We must be careful that the ordering is compatible.
In particular, the ordering must respect the ``grading'' above, that is
in each degree if we set $Z_0=1$ we must still get a multiplicative
monomial ordering.  For example the standard reverse
lex ordering in each degree will not work, although the lex order will.

As an example take the Faran map $F(z) = (z_1^3,\sqrt{3}z_1z_2,z_2^3)$.  We add 1
and homogenize to get $(Z_0^3,Z_1^3,\sqrt{3}Z_0Z_1Z_2,Z_2^3)$.
We compute the gin in the graded lex order.
The degree 3 part of the gin is generated by
\begin{equation}
Z_0^3, Z_0^2 Z_1, Z_0^2 Z_2, Z_0Z_1^2 .
\end{equation}
Therefore, 
\begin{equation}
\operatorname{gin}(\afspan{F}) =
\operatorname{span} \{
1, z_1, z_2, z_1^2
\} .
\end{equation}


\section{Invariants of the quotient of a hyperquadric map} \label{section:quotient}

When the source is either the ball $\ballsig{a}{b}$ in the holomorphic case
or $Q(a,b)$ in the CR case, we obtain further invariants by considering
the quotient of the function composed with the defining function of the
target divided by the defining function of the source as we did in the
rational case.

Recall the definition 
$\norm{z}_b^2 =
-\sum_{j=1}^{b} \abs{z_j}^2 +
\sum_{j=b+1}^{a+b} \abs{z_j}^2$.
The defining function for $Q(a,b)$ is then $\norm{z}_b^2 - 1 = 0$.
Suppose $F \colon U \subset Q(a,b) \to Q(A,B)$ is a real-analytic CR map.
A real-analytic CR map is
a holomorphic map of a neighborhood of $U$, and so we will identify
$F$ with this holomorphic map.
Define the quotient $q$ via
\begin{equation}
\norm{F(z)}_B^2-1 = \bigl(\norm{z}_b^2-1\bigr)q(z,\bar{z}) .
\end{equation}
Near some point find the holomorphic decomposition of $q$:
\begin{equation}
q(z,\bar{z}) = \norm{h_+(z)}^2-\norm{h_-(z)}^2 ,
\end{equation}
where $h_+$ and $h_-$ are possibly $\ell^2$ valued holomorphic maps.
Write
\begin{equation}
H(q) = \{ h_+,h_- \}
\end{equation}
for the set of functions in the holomorphic decomposition of $q$.
The holomorphic decomposition is not unique and depends on the point.
However we do have the following lemma.

\begin{lemma} \label{lemma:holdecompinv}
Suppose $q \colon U \subset \C^n \to \R$ is real-analytic and $U$ is a
connected open set.  If $H_1(q)$ and $H_2(q)$ are two holomorphic decompositions
at two points of $U$, then
\begin{equation}
\operatorname{gin}\bigl(\operatorname{span} H_1(q)\bigr)
=
\operatorname{gin}\bigl(\operatorname{span} H_2(q)\bigr) .
\end{equation}
\end{lemma}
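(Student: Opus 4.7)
The plan is to reduce Lemma~\ref{lemma:holdecompinv} to two essentially routine facts. First, on any open set $W$ on which $q$ admits a holomorphic decomposition, the linear span of the components of that decomposition is determined by $q$ alone; this uniqueness was already noted in Section~\ref{section:quotientinvration} and comes from the canonical positive/negative part decomposition of the Hermitian coefficient matrix of $q$ (see \cite{DAngelo:book}). Second, the gin of a subspace $X\subset\sO(V)$ is unchanged under restriction to a nonempty open $W\subset V$: by Theorem~\ref{thm:ourgalligo} the generic affine $\tau$ computing $\operatorname{gin}(X)$ may be chosen with $\tau(0)\in W$, and the Taylor series at $0$ of $X\circ\tau$ and of $(X|_W)\circ\tau$ coincide, so the initial monomial subspace is the same.

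Assume first that the neighborhoods $V_1\ni p_1$ and $V_2\ni p_2$ on which $H_1(q)$ and $H_2(q)$ are defined intersect, and pick any nonempty open $W\subset V_1\cap V_2$. Restricting the components of $H_i(q)$ to $W$ gives two holomorphic decompositions of $q|_W$ (linear independence is preserved by the identity theorem), so by the uniqueness fact,
\begin{equation*}
\operatorname{span}(H_1(q))|_W = \operatorname{span}(H_2(q))|_W .
\end{equation*}
Taking gins of both sides and using restriction invariance yields $\operatorname{gin}(\operatorname{span} H_1(q)) = \operatorname{gin}(\operatorname{span} H_2(q))$. For general $p_1,p_2\in U$ whose decomposition neighborhoods may not overlap, I would use the connectedness of $U$: join $p_1$ to $p_2$ by a compact path and cover it by finitely many open sets $W_0,\ldots,W_k$ on each of which $q$ admits a local holomorphic decomposition, with $W_j\cap W_{j+1}\neq\emptyset$ and $p_1\in W_0$, $p_2\in W_k$. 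Applying the overlap argument to each consecutive pair and telescoping the resulting equalities of gins yields the conclusion.

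The main obstacle is justifying the uniqueness of $\operatorname{span} H(q)$ on an open set. One must work with the no-cancellation convention for the decomposition (e.g., orthogonal ranges for the positive and negative parts of the Hermitian form); otherwise, adding cancelling pairs $(\varphi,\varphi)$ to $h_+$ and $h_-$ would enlarge the span. Under this convention the combined span can be described as the span of holomorphic slices $z\mapsto\tilde q(z,\bar w)$ of the polarization of $q$ as $w$ varies, a description that is manifestly intrinsic. Once that is granted, everything else is a routine unwinding of the definitions and an application of connectedness.
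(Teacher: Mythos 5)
Your proposal is correct and follows essentially the same route as the paper: a connectedness/path argument reduces to the case of overlapping domains of convergence, and there the span of the holomorphic decomposition is determined by $q$ alone (the paper enforces your ``no-cancellation convention'' by requiring $\{h_+,h_-\}$ and $\{h'_+,h'_-\}$ to be linearly independent sets), after which equality of gins follows since gin is unchanged under restriction to an open subset. Your write-up merely makes explicit two points the paper treats as immediate, namely the restriction-invariance of gin and the intrinsic description of the span via polarization.
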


\begin{proof}
By a standard connectedness argument using a path between the two points we
only need to consider the case where the domains of convergence of $H_1$ and
$H_2$ overlap.  Assume that we work on this
intersection.  So we only need to show that if 
\begin{equation} \label{eq:decomsame}
\norm{h_+(z)}^2-\norm{h_-(z)}^2 = \norm{h'_+(z)}^2-\norm{h'_-(z)}^2
\end{equation}
for maps $h_+$, $h_-$, $h'_+$, $h'_-$ converging on a fixed open set,
with $\{h_+,h_-\}$ and $\{h'_+,h'_-\}$ linearly independent sets,
then
$\operatorname{gin}\bigl(\operatorname{span}\{h_+,h_-\}\bigr)
=
\operatorname{gin}\bigl(\operatorname{span}\{h'_+,h'_-\}\bigr)$.
From \eqref{eq:decomsame} we have 
$\operatorname{span}\{h_+,h_-\} = \operatorname{span}\{h'_+,h'_-\}$, and so
the result follows.
\end{proof}

Thus, we do not need to specify which point and which decomposition is used.

\begin{thm}
Suppose $F \colon U \subset Q(a,b) \to Q(A,B)$ and
$G \colon V \subset  Q(a,b) \to Q(A,B)$ are real-analytic CR maps 
equivalent in the sense that
there exists a linear fractional automorphism $\tau$ of $Q(a,b)$ 
and a linear fractional automorphism $\chi$ of $Q(A,B)$ such that
\begin{equation}
F \circ \tau = \chi \circ G .
\end{equation}
Define the quotients $q_F$ and $q_G$ via
\begin{equation}
\norm{F(z)}_B^2-1 = \bigl(\norm{z}_b^2-1\bigr)q_F(z,\bar{z})
\qquad \text{and}\qquad
\norm{G(z)}_B^2-1 = \bigl(\norm{z}_b^2-1\bigr)q_G(z,\bar{z}) .
\end{equation}
Then taking holomorphic decompositions of $q_F$ and $q_G$ at any point
in $U$ and $V$ respectively 
\begin{equation}
\operatorname{gin}\bigl(\operatorname{span} H(q_F)\bigr) =
\operatorname{gin}\bigl(\operatorname{span} H(q_G)\bigr) .
\end{equation}
\end{thm}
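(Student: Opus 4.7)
The plan is to mirror the proof of the rational case in Section~\ref{section:quotientinvration}, using Lemmas~\ref{lemma:biholgin}, \ref{lemma:multgin}, and \ref{lemma:holdecompinv} to pass between the holomorphic side and the gin side. By Lemma~\ref{lemma:holdecompinv}, $\operatorname{gin}\bigl(\operatorname{span} H(q_F)\bigr)$ is independent of the chosen decomposition point, so I am free to work on a small open neighborhood of any convenient point at which both $\tau$ and $\chi$ are holomorphic and regular.

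The first (and main) step is the structural identity that a linear fractional automorphism of a hyperquadric rescales the defining function by the squared modulus of a nowhere-vanishing holomorphic function times a nonzero real scalar. Writing $\tau = P_\tau/q_\tau$ with $P_\tau$ and $q_\tau$ affine, the two real polynomials $\linnprod{P_\tau(z)}{P_\tau(z)}_b - \abs{q_\tau(z)}^2$ and $\norm{z}_b^2 - 1$ have bidegree $(1,1)$ in $(z,\bar z)$ and vanish on the same connected real hypersurface, hence are proportional by a nonzero real scalar $\epsilon_\tau$; this gives
\begin{equation}
\norm{\tau(z)}_b^2 - 1 = \frac{\epsilon_\tau}{\abs{q_\tau(z)}^2}\bigl(\norm{z}_b^2 - 1\bigr),
\end{equation}
and the analogous identity holds for $\chi$ with some $\epsilon_\chi \in \R \setminus \{0\}$. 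Substituting $F \circ \tau = \chi \circ G$ into the defining equations for $q_F$ and $q_G$ and cancelling the common factor $\norm{z}_b^2 - 1$, one arrives at the key identity
\begin{equation}
q_F\bigl(\tau(z),\overline{\tau(z)}\bigr) = C \abs{\varphi(z)}^2 q_G(z,\bar z),
\end{equation}
where $C = \epsilon_\chi/\epsilon_\tau \in \R \setminus \{0\}$ and $\varphi(z) = q_\tau(z)/q_\chi\bigl(G(z)\bigr)$ is holomorphic and nowhere zero on our neighborhood.

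The remainder is a routine assembly. Expanding both sides of the key identity in holomorphic decompositions and invoking the uniqueness-of-span principle used in the proof of Lemma~\ref{lemma:holdecompinv} forces
\begin{equation}
\bigl(\operatorname{span} H(q_F)\bigr) \circ \tau = \varphi \cdot \operatorname{span} H(q_G),
\end{equation}
with a harmless swap of $h_+$ and $h_-$ on the right-hand side when $C < 0$ that does not affect the span. Lemma~\ref{lemma:biholgin} then eliminates $\tau$ from the gin, Lemma~\ref{lemma:multgin} eliminates the nowhere-vanishing $\varphi$, and the desired equality follows. The main obstacle is the first step: establishing the rescaling identity uniformly across the full group of linear fractional automorphisms, with particular care for the sign of $\epsilon_\tau$ (and $\epsilon_\chi$) when $a = b+1$ (respectively $A = B+1$), where certain automorphisms swap the two sides of the hyperquadric; this is the hyperquadric analogue of the classical ball Möbius identity $1 - \abs{\phi(z)}^2 = (1 - \abs{a}^2)(1 - \abs{z}^2)/\abs{1 - \linnprod{z}{a}}^2$.
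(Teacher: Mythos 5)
Your proposal is correct and follows essentially the same route as the paper: write $\tau$ and $\chi$ as quotients of affine maps, show the hyperquadric defining function transforms by $\pm\abs{\text{denominator}}^{-2}$ (your real constant $C=\epsilon_\chi/\epsilon_\tau$ is just the paper's $\pm$ before normalization, and is harmless for the span of the decomposition), chain the defining equations to get $q_F\circ\tau = \pm\abs{\varphi}^2 q_G$, and then eliminate $\tau$ and $\varphi$ via Lemmas~\ref{lemma:biholgin} and \ref{lemma:multgin}. The only cosmetic difference is that you justify the rescaling identity by comparing bidegree-$(1,1)$ polynomials vanishing on the same hypersurface, whereas the paper reads it off from the matrix representation of the automorphism group.
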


\begin{proof}
Write the linear fractional maps $\tau$ and $\chi$ as
\begin{equation}
\tau = \frac{\tau'}{\tau''}
\qquad \text{and} \qquad
\chi = \frac{\chi'}{\chi''} .
\end{equation}
for affine maps $\tau'$, $\tau''$, $\chi'$, and $\chi''$.  Then
\begin{equation}
\norm{\tau'(z)}_b^2-\abs{\tau''(z)}^2 = \pm
(\norm{z}_b^2-1)
\end{equation}
so
\begin{equation}
\abs{\tau''(z)}^2
(\norm{\tau(z)}_b^2-1) = \pm
(\norm{z}_b^2-1) .
\end{equation}
The $\pm$ is there again in case $\tau$ switches the sides of $Q(a,b)$.
We have the same equation for $w$ and $\chi$ with $A,B$ instead of $a,b$.

We have $\norm{F(\tau(z))}_{B}^2+1 = \norm{\chi \circ G(z)}_{B}^2+1$.
Then
\begin{equation}
\begin{split}
(\norm{z}_{b}^2-1) q_F\bigl(\tau(z),\overline{\tau(z)}\bigr)
& =
\pm
\abs{\tau''(z)}^2
\bigl(\norm{\tau(z)}_{b}^2-1\bigr) q_F\bigl(\tau(z),\overline{\tau(z)}\bigr)
\\
& =
\pm
\abs{\tau''(z)}^2
\bigl(\norm{F(\tau(z))}_{B}^2-1\bigr)
\\
& =
\pm
\abs{\tau''(z)}^2
\bigl(\norm{\chi \circ G(z)}_{B}^2-1\bigr)
\\
& =
\pm
\abs{\frac{\tau''(z)}{\chi''\bigl(G(z)\bigr)}}^2
\bigl(\norm{G(z)}_{B}^2-1\bigr)
\\
& =
\pm
\abs{\frac{\tau''(z)}{\chi''\bigl(G(z)\bigr)}}^2
\bigl(\norm{z}_b^2-1\bigr)q_G(z,\bar{z}) .
\end{split}
\end{equation}
In other words
\begin{equation}
q_F\bigl(\tau(z),\overline{\tau(z)}\bigr) = \pm
\abs{\frac{\tau''(z)}{\chi''\bigl(G(z)\bigr)}}^2
q_G(z,\bar{z}) .
\end{equation}
Multiplying by absolute value squared of a holomorphic function
multiplies the elements of the holomorphic decomposition by
that function and so does not change the gin by Lemma~\ref{lemma:multgin}.
Similarly composing with $\tau$ does not change the gin either by
Lemma~\ref{lemma:biholgin}.
\end{proof}

There is an equivalent theorem for holomorphic maps, when the automorhphism
maps on the source are simply the linear fractional automorphisms of
$\ballsig{a}{b}$, that is, automorphisms of $\bP^{a+b}$ preserving the
closure of $\ballsig{a}{b}$ in $\bP^{a+b}$.  We lose no generality
if we also allow swapping sides (when $a=b+1$) and hence we consider all
linear fractional automorphisms of $Q(a,b)$, that is, self maps of
$\bP^{a+b}$ preserving $HQ(a,b+1)$.

\begin{thm}
Suppose $F \colon U \subset \C^{a+b} \to \C^{A+B}$ and
$G \colon V \subset \C^{a+b} \to \C^{A+B}$ are holomorphic maps 
equivalent in the sense that
there exists a linear fractional automorphism $\tau$ of $Q(a,b)$,
where $\tau(V) = U$,
and a linear fractional automorphism $\chi$ of $Q(A,B)$ such that
\begin{equation}
F \circ \tau = \chi \circ G .
\end{equation}
Define the quotients $q_F$ and $q_G$ as before:
\begin{equation}
\norm{F(z)}_B^2-1 = \bigl(\norm{z}_b^2-1\bigr)q_F(z,\bar{z})
\qquad \text{and}\qquad
\norm{G(z)}_B^2-1 = \bigl(\norm{z}_b^2-1\bigr)q_G(z,\bar{z}) .
\end{equation}
Then taking holomorphic decompositions of $q_F$ and $q_G$ at any point
in $U$ and $V$ respectively 
\begin{equation}
\operatorname{gin}\bigl(\operatorname{span} H(q_F)\bigr) =
\operatorname{gin}\bigl(\operatorname{span} H(q_G)\bigr) .
\end{equation}
\end{thm}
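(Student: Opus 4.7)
The plan is to follow the argument of the preceding CR version of the theorem almost verbatim, with the only new ingredient being a careful handling of the fact that $\tau$ is now a linear fractional self map of $\C^{a+b}$ rather than the restriction of one to $Q(a,b)$. Writing the linear fractional maps as $\tau = \tau'/\tau''$ and $\chi = \chi'/\chi''$ for affine $\tau', \tau'', \chi', \chi''$, the fact that $\tau$ preserves $HQ(a,b+1)$ (equivalently, is a linear fractional automorphism of $Q(a,b)$) gives the identity
\begin{equation}
\abs{\tau''(z)}^2 \bigl( \norm{\tau(z)}_b^2 - 1 \bigr) = \pm \bigl( \norm{z}_b^2 - 1 \bigr),
\end{equation}
and analogously for $\chi$ on $Q(A,B)$, with the $\pm$ accounting for the possibility of swapping sides when $a = b+1$ or $A = B+1$.

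With this identity in hand, the chain of equalities from the CR proof transfers verbatim. Starting from $(\norm{z}_b^2 - 1)\, q_F(\tau(z), \overline{\tau(z)})$, I would multiply by $\abs{\tau''(z)}^2$ to convert to the expression $\pm \abs{\tau''(z)}^2 (\norm{F(\tau(z))}_B^2 - 1)$, substitute $F \circ \tau = \chi \circ G$, and then apply the target side identity to extract $\norm{G(z)}_B^2 - 1 = (\norm{z}_b^2 - 1)\, q_G(z,\bar z)$. After cancellation this yields the transformation rule
\begin{equation}
q_F\bigl( \tau(z), \overline{\tau(z)} \bigr) = \pm \abs{\frac{\tau''(z)}{\chi''(G(z))}}^2 q_G(z,\bar z).
\end{equation}

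To conclude, I would note that all of the operations appearing on the right hand side preserve the generic initial monomial subspace of the span of a holomorphic decomposition. Multiplying $q_G$ by $\abs{\varphi}^2$ where $\varphi = \tau''/\bigl(\chi'' \circ G\bigr)$ multiplies the elements of a holomorphic decomposition by $\varphi$, which leaves the gin invariant by Lemma~\ref{lemma:multgin}. Precomposition by the biholomorphism $\tau$ on its domain of holomorphy leaves the gin invariant by Lemma~\ref{lemma:biholgin}. The sign $\pm$ merely interchanges $h_+$ and $h_-$ in the decomposition and hence does not affect the span. Since $\operatorname{gin}\bigl(\operatorname{span} H(q_F)\bigr)$ and $\operatorname{gin}\bigl(\operatorname{span} H(q_G)\bigr)$ are independent of the chosen expansion point by Lemma~\ref{lemma:holdecompinv}, the asserted equality follows.

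The main subtlety is cosmetic rather than substantive: the function $\varphi$ is meromorphic on $\C^{a+b}$, being only holomorphic away from the pole divisor of $\tau$ and the zero set of $\chi'' \circ G$. I would deal with this by localizing to a small open subset of $V$ on which $\varphi$ is holomorphic and nonvanishing and on which $\tau$ is a biholomorphism onto its image, computing the gins there, and then invoking Lemma~\ref{lemma:holdecompinv} to transfer the result to any chosen point in $U$ and $V$.
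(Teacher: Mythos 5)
Your proposal is correct and follows essentially the same route as the paper: the paper proves the CR version by exactly this chain of identities (the relation $\abs{\tau''(z)}^2(\norm{\tau(z)}_b^2-1)=\pm(\norm{z}_b^2-1)$, the resulting transformation rule for $q_F$ versus $q_G$, and then Lemmas~\ref{lemma:multgin}, \ref{lemma:biholgin}, and \ref{lemma:holdecompinv}), and states the holomorphic version as an immediate analogue. Your added remark about localizing away from the poles of $\tau$ and the zeros of $\chi''\circ G$ is a sensible explicit treatment of a point the paper leaves implicit.
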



\section{Gins of real-analytic functions}
\label{section:holdecomp}

We end the article with a remark that Lemma~\ref{lemma:holdecompinv}
may be of independent interest, not only for mapping
questions.  Let us consider the gin
of a holomorphic decomposition of real-analytic functions.
That is, for a domain $U \subset \C^n$,
consider two real-analytic functions
of real-analytic functions $r_1 \colon U \to \R$ and $r_2 \colon U \to \R$.
Then say the functions are biholomorphically equivalent
if there exists a biholomorphism $F \colon U \to U$,
such that $r_1 = r_2 \circ F$.

If we take $H(r_j)$ to be the holomorphic decomposition of $r_j$ at any
point of $U$, then
Lemma~\ref{lemma:holdecompinv} says that if $r_1$ and $r_2$ are equivalent
as above, then
\begin{equation}
\operatorname{gin}\bigl(\afspan{H(r_1)}\bigr)
=
\operatorname{gin}\bigl(\afspan{H(r_2)}\bigr) .
\end{equation}

The gin as defined above is not a pointwise invariant.  That is, a
real-analytic function defined on a connected open set has the same gin near every
point.  By the same argument as before we can also define the gin of a
real-analytic function on a connected complex manifold $U$ by noting
that the gin is already well-defined if we take any connected chart.


\def\MR#1{\relax\ifhmode\unskip\spacefactor3000 \space\fi%
  \href{http://www.ams.org/mathscinet-getitem?mr=#1}{MR#1}}

\begin{bibdiv}
\begin{biblist}

\bib{Alexander77}{article}{
   author={Alexander, H.},
   title={Proper holomorphic mappings in $C^{n}$},
   journal={Indiana Univ.\ Math.\ J.},
   volume={26},
   date={1977},
   number={1},
   pages={137--146},
   review={\MR{0422699}},
}

\bib{DAngelo:book}{book}{
   author={D'Angelo, John P.},
   title={Several complex variables and the geometry of real hypersurfaces},
   series={Studies in Advanced Mathematics},
   publisher={CRC Press, Boca Raton, FL},
   date={1993},
   pages={xiv+272},
   isbn={0-8493-8272-6},
   review={\MR{1224231}},
}

\bib{DAngelo88}{article}{
   author={D'Angelo, John P.},
   title={Polynomial proper maps between balls},
   journal={Duke Math.\ J.},
   volume={57},
   date={1988},
   number={1},
   pages={211--219},
   issn={0012-7094},
   review={\MR{952233}},
   doi={10.1215/S0012-7094-88-05710-9},
}

\bib{DAngelo13}{book} {
   author={D'Angelo, John P.},
   title={Hermitian analysis},
   series={Cornerstones},
   note={From Fourier series to Cauchy-Riemann geometry},
   publisher={Birkh\"auser/Springer, New York},
   date={2013},
   review={\MR{3134931}},
   doi={10.1007/978-1-4614-8526-1},
}

\bib{DKR}{article}{
   author={D'Angelo, John P.},
   author={Kos, {\v{S}}imon},
   author={Riehl, Emily},
   title={A sharp bound for the degree of proper monomial mappings between
   balls},
   journal={J.\ Geom.\ Anal.},
   volume={13},
   date={2003},
   number={4},
   pages={581--593},
   issn={1050-6926},
   review={\MR{2005154}},
   doi={10.1007/BF02921879},
}

\bib{Faran82}{article}{
   author={Faran, James J.},
   title={Maps from the two-ball to the three-ball},
   journal={Invent.\ Math.},
   volume={68},
   date={1982},
   number={3},
   pages={441--475},
   review={\MR{669425}},
   doi={10.1007/BF01389412},
}

\bib{FHJZ}{article}{
   author={Faran, James},
   author={Huang, Xiaojun},
   author={Ji, Shanyu},
   author={Zhang, Yuan},
   title={Polynomial and rational maps between balls},
   journal={Pure Appl.\ Math.\ Q.},
   volume={6},
   date={2010},
   number={3},
   pages={829--842},
   issn={1558-8599},
   review={\MR{2677315}},
   doi={10.4310/PAMQ.2010.v6.n3.a10},
}

\bib{Forstneric86}{article}{
   author={Forstneri{\v{c}}, Franc},
   title={Embedding strictly pseudoconvex domains into balls},
   journal={Trans.\ Amer.\ Math.\ Soc.},
   volume={295},
   date={1986},
   number={1},
   pages={347--368},
   review={\MR{831203}},
   doi={10.2307/2000160},
}

\bib{Forstneric89}{article}{
   author={Forstneri{\v{c}}, Franc},
   title={Extending proper holomorphic mappings of positive codimension},
   journal={Invent.\ Math.},
   volume={95},
   date={1989},
   number={1},
   pages={31--61},
   review={\MR{969413}},
}

\bib{Grauert}{article}{
   author={Grauert, Hans},
   title={\"Uber die Deformation isolierter Singularit\"aten analytischer
   Mengen},
   language={German},
   journal={Invent. Math.},
   volume={15},
   date={1972},
   pages={171--198},
   review={\MR{0293127}},
}

\bib{M2}{misc}{
   author = {Grayson, Daniel R.},
   author = {Stillman, Michael E.},
   title = {Macaulay2, a software system for research in algebraic geometry},
   note = {Available at \href{http://www.math.uiuc.edu/Macaulay2/}{http://www.math.uiuc.edu/Macaulay2/}},
}

\bib{Green:gin}{article}{
   author={Green, Mark},
   title={Generic initial ideals},
   conference={
      title={Six lectures on commutative algebra},
      address={Bellaterra},
      date={1996},
   },
   book={
      series={Progr.\ Math.},
      volume={166},
      publisher={Birkh\"auser},
      place={Basel},
   },
   date={1998},
   pages={119--186},
   review={\MR{1648665}},
}

\bib{GLV}{article}{
   author={Grundmeier, Dusty},
   author={Lebl, Ji{\v{r}}{\'{\i}}},
   author={Vivas, Liz},
   title={Bounding the rank of Hermitian forms and rigidity for CR mappings
   of hyperquadrics},
   journal={Math.\ Ann.},
   volume={358},
   date={2014},
   number={3-4},
   pages={1059--1089},
   issn={0025-5831},
   review={\MR{3175150}},
   doi={10.1007/s00208-013-0989-z},
}

\bib{HuangJiYin14}{article}{
   author={Huang, Xiaojun},
   author={Ji, Shanyu},
   author={Yin, Wanke},
   title={On the third gap for proper holomorphic maps between balls},
   journal={Math.\ Ann.},
   volume={358},
   date={2014},
   number={1-2},
   pages={115--142},
   review={\MR{3157993}},
   doi={10.1007/s00208-013-0952-z},
}

\bib{Lebl:hq23}{article}{
   author={Lebl, Ji{\v{r}}{\'{\i}}},
   title={Normal forms, Hermitian operators, and CR maps of spheres and
   hyperquadrics},
   journal={Michigan Math.\ J.},
   volume={60},
   date={2011},
   number={3},
   pages={603--628},
   issn={0026-2285},
   review={\MR{2861091}},
   doi={10.1307/mmj/1320763051},
}

\bib{Reiter:class}{unpublished}{
  author={Reiter, Michael},
  title={Classification of Holomorphic Mappings of Hyperquadrics from
${\mathbb C}^2$ to ${\mathbb C}^3$},
  status={to appear in J.\ Geom.\ Anal.},
  note={\href{http://arxiv.org/abs/1409.5968}{arXiv:1409.5968}},
}

\bib{Webster:1978}{article}{
   author={Webster, S. M.},
   title={Some birational invariants for algebraic real hypersurfaces},
   journal={Duke Math.\ J.},
   volume={45},
   date={1978},
   number={1},
   pages={39--46},
   issn={0012-7094},
   review={\MR{0481086}},
}

\bib{Whitney:book}{book}{
   author={Whitney, Hassler},
   title={Complex analytic varieties},
   publisher={Addison-Wesley Publishing Co., Reading, Mass.-London-Don
   Mills, Ont.},
   date={1972},
   pages={xii+399},
   review={\MR{0387634}},
}

\end{biblist}
\end{bibdiv}

\end{document}